\newtheorem{theorem}{Theorem}[section]
\newtheorem{proposition}[theorem]{Proposition}
\newtheorem{lemma}[theorem]{Lemma}
\newtheorem{corollary}[theorem]{Corollary}
\theoremstyle{definition}
\newtheorem{example}[theorem]{Example}
\newtheorem{remark}[theorem]{Remark}
\begin{document}

\author[P. Danchev]{Peter Danchev}
\address{Institute of Mathematics and Informatics, Bulgarian Academy of Sciences, 1113 Sofia, Bulgaria}
\email{danchev@math.bas.bg; pvdanchev@yahoo.com}
\author[O. Hasanzadeh]{Omid Hasanzadeh}
\address{Department of Mathematics, Tarbiat Modares University, 14115-111 Tehran Jalal AleAhmad Nasr, Iran}
\email{hasanzadeomiid@gmail.com}
\author[A. Moussavi]{Ahmad Moussavi}
\address{Department of Mathematics, Tarbiat Modares University, 14115-111 Tehran Jalal AleAhmad Nasr, Iran}
\email{moussavi.a@modares.ac.ir; moussavi.a@gmail.com}

\title[Rings whose clean elements are uniquely strongly clean]{\small Rings whose clean elements are \\ uniquely strongly clean}
\keywords{clean element (ring), uniquely clean element (ring), strongly clean element (ring), uniquely strongly clean element (ring)}
\subjclass[2010]{16S34, 16U60}

\begin{abstract} We define the class of {\it CUSC} rings, that are those rings whose clean elements are uniquely strongly clean. These rings are a common generalization of the so-called {\it USC} rings, introduced by Chen-Wang-Zhou in J. Pure \& Applied Algebra (2009), which are rings whose elements are uniquely strongly clean. These rings also generalize the so-called {\it CUC} rings, defined by Calugareanu-Zhou in Mediterranean J. Math. (2023), which are rings whose clean elements are uniquely clean. We establish that a ring is USC if, and only if, it is simultaneously CUSC and potent. Some other interesting relationships with CUC rings are obtained as well.
\end{abstract}

\maketitle




\section{Introduction and Basic Notion}

Everywhere in the current paper, let $R$ be an associative but {\it not} necessarily commutative ring having identity element, usually stated as $1$. Standardly, for such a ring $R$, the letters $U(R)$, $\rm{Nil}(R)$ and ${\rm Id}(R)$ are designed for the set of invertible elements (also termed as the unit group of $R$), the set of nilpotent elements and the set of idempotent elements in $R$, respectively. Likewise, $J(R)$ denotes the Jacobson radical of $R$. For all other unexplained explicitly notions and notations, we refer to the classical source \cite{6}.

In order to present our achievements here, we now need the necessary background material as follows: Mimicking \cite{5}, an element $a$ from a ring $R$ is called {\it clean} if there exists $e\in {\rm Id}(R)$ such that $a-e\in U(R)$. Then, $R$ is said to be {\it clean} if each element of $R$ is clean. In addition, $a$ is called {\it strongly clean} provided $ae=ea$ and, if each element of $R$ are strongly clean, then $R$ is said to {\it strongly clean} too.
On the other hand, imitating \cite{3}, $a\in R$ is called {\it uniquely clean} if there exists a unique $e \in {\rm Id}(R)$ such that $a-e \in U(R)$. In particular, a ring $R$ is said to be {\it uniquely clean} (or just {\it UC} for short) if every element in $R$ is uniquely clean.

Generalizing these notions, in \cite{2} was defined an element $a \in R$ to be {\it uniquely strongly clean} if there exists a unique $e \in {\rm Id}(R)$ such that $a-e \in U(R)$ and $ae=ea$. In particular, a ring $R$ is {\it uniquely strongly clean} (or just {\it USC} for short) if each element in $R$ is uniquely strongly clean.

In a similar vein, expanding the first part of the above concepts, in \cite{1} a ring is called {\it CUC} if any clean element is uniquely clean, and a ring is called {\it UUC} if any unit is uniquely clean.

Our work targets to extend these two last definitions by defining and exploring the following key instruments: A ring is called {\it CUSC} ring if every clean element is uniquely strongly clean, and a ring is called {\it UUSC} ring if every unit is uniquely strongly clean.\\

The relations between all of these notions are showed by the next diagrams:

\begin{center}

\tikzset{every picture/.style={line width=0.75pt}} 

\begin{tikzpicture}[x=0.75pt,y=0.75pt,yscale=-1,xscale=1]

\draw   (76,152.5) -- (148,152.5) -- (148,195) -- (76,195) -- cycle ;
\draw   (195,152.5) -- (267,152.5) -- (267,195) -- (195,195) -- cycle ;
\draw   (315,153.5) -- (387,153.5) -- (387,196) -- (315,196) -- cycle ;
\draw   (433,152.5) -- (508,152.5) -- (508,195) -- (433,195) -- cycle ;
\draw   (195,227.5) -- (267,227.5) -- (267,270) -- (195,270) -- cycle ;
\draw   (434,302.5) -- (506,302.5) -- (506,345) -- (434,345) -- cycle ;
\draw   (436,78.5) -- (508,78.5) -- (508,121) -- (436,121) -- cycle ;
\draw   (298,78.5) -- (404,78.5) -- (404,121) -- (298,121) -- cycle ;
\draw    (147,174) -- (194,174.48) ;
\draw [shift={(196,174.5)}, rotate = 180.58] [color={rgb, 255:red, 0; green, 0; blue, 0 }  ][line width=0.75]    (10.93,-3.29) .. controls (6.95,-1.4) and (3.31,-0.3) .. (0,0) .. controls (3.31,0.3) and (6.95,1.4) .. (10.93,3.29)   ;
\draw    (268,174) -- (312,173.52) ;
\draw [shift={(314,173.5)}, rotate = 179.38] [color={rgb, 255:red, 0; green, 0; blue, 0 }  ][line width=0.75]    (10.93,-3.29) .. controls (6.95,-1.4) and (3.31,-0.3) .. (0,0) .. controls (3.31,0.3) and (6.95,1.4) .. (10.93,3.29)   ;
\draw    (404,99.5) -- (434,99.5) ;
\draw [shift={(436,99.5)}, rotate = 180] [color={rgb, 255:red, 0; green, 0; blue, 0 }  ][line width=0.75]    (10.93,-3.29) .. controls (6.95,-1.4) and (3.31,-0.3) .. (0,0) .. controls (3.31,0.3) and (6.95,1.4) .. (10.93,3.29)   ;
\draw    (112,195) -- (112,249.5) ;
\draw    (112,249.5) -- (193,250.48) ;
\draw [shift={(195,250.5)}, rotate = 180.69] [color={rgb, 255:red, 0; green, 0; blue, 0 }  ][line width=0.75]    (10.93,-3.29) .. controls (6.95,-1.4) and (3.31,-0.3) .. (0,0) .. controls (3.31,0.3) and (6.95,1.4) .. (10.93,3.29)   ;
\draw    (233,271) -- (233,298.5) -- (233,323.5) ;
\draw    (233,323.5) -- (433,323.5) ;
\draw [shift={(435,323.5)}, rotate = 180] [color={rgb, 255:red, 0; green, 0; blue, 0 }  ][line width=0.75]    (10.93,-3.29) .. controls (6.95,-1.4) and (3.31,-0.3) .. (0,0) .. controls (3.31,0.3) and (6.95,1.4) .. (10.93,3.29)   ;
\draw    (232,98) -- (232,152.5) ;
\draw    (232,98) -- (296,98.48) ;
\draw [shift={(298,98.5)}, rotate = 180.43] [color={rgb, 255:red, 0; green, 0; blue, 0 }  ][line width=0.75]    (10.93,-3.29) .. controls (6.95,-1.4) and (3.31,-0.3) .. (0,0) .. controls (3.31,0.3) and (6.95,1.4) .. (10.93,3.29)   ;
\draw    (267,250) -- (348,249.5) ;
\draw    (348,249.5) -- (347.04,199.5) ;
\draw [shift={(347,197.5)}, rotate = 88.9] [color={rgb, 255:red, 0; green, 0; blue, 0 }  ][line width=0.75]    (10.93,-3.29) .. controls (6.95,-1.4) and (3.31,-0.3) .. (0,0) .. controls (3.31,0.3) and (6.95,1.4) .. (10.93,3.29)   ;
\draw    (470,195) -- (470,302.5) ;*********
\draw [shift={(470,195)}, rotate = 89.55] [color={rgb, 255:red, 0; green, 0; blue, 0 }  ][line width=0.75]    (10.93,-3.29) .. controls (6.95,-1.4) and (3.31,-0.3) .. (0,0) .. controls (3.31,0.3) and (6.95,1.4) .. (10.93,3.29)   ;
\draw    (387,175) -- (431,174.52) ;
\draw [shift={(433,174.5)}, rotate = 179.38] [color={rgb, 255:red, 0; green, 0; blue, 0 }  ][line width=0.75]    (10.93,-3.29) .. controls (6.95,-1.4) and (3.31,-0.3) .. (0,0) .. controls (3.31,0.3) and (6.95,1.4) .. (10.93,3.29)   ;

\draw (99,166) node [anchor=north west][inner sep=0.75pt]   [align=left] {UC};
\draw (211,167) node [anchor=north west][inner sep=0.75pt]   [align=left] {USC};
\draw (328,166) node [anchor=north west][inner sep=0.75pt]   [align=left] {CUSC};
\draw (451,91) node [anchor=north west][inner sep=0.75pt]   [align=left] {Clean};
\draw (444,166) node [anchor=north west][inner sep=0.75pt]   [align=left] {UUSC};
\draw (452,316) node [anchor=north west][inner sep=0.75pt]   [align=left] {UUC};
\draw (210,241) node [anchor=north west][inner sep=0.75pt]   [align=left] {CUC};
\draw (303,93) node [anchor=north west][inner sep=0.75pt]  [font=\small] [align=left] {Strongly Clean};

\end{tikzpicture}

\end{center}

Our plan to do that is structured thus: In the next section, we give some critical examples and properties of both CUSC and UUSC rings and their extensions (see, for instance, Propositions~\ref{prop2.2} and \ref{formal}). The subsequent third section is devoted to the exhibition of some other remarkable characterization behaviors of these two classes of rings under some standard extending procedures like matrix rings and group rings (see Theorems~\ref{theo3.1}, \ref{theo3.10}, \ref{theo3.4}, \ref{theo3.11}, \ref{prop4.3}). We also pose some challenging properties which, hopefully, will stimulate a further research on the subject.

\section{Examples and Major Properties}

We start our consideration with the following.

\begin{example}\label{exam1.4} The following are fulfilled:
\hskip 1cm
\begin{enumerate}
\item
$\mathbb{Z}_2[x]$ is CUSC, but is {\it not} USC.

\item
$\mathbb{Z}[x]$ is CUC, but is {\it not} UC.

\item
$R=\begin{pmatrix}
\mathbb{Z}_2 & \mathbb{Z}_2 \\
0 & \mathbb{Z}_2
\end{pmatrix}$ is UUC, but is {\it not} CUC.

\item
${\rm T}_2(\mathbb{Z}_2)$ is CUSC, but is {\it not} CUC.
\end{enumerate}
\end{example}

\begin{proof}
\hskip 1cm
\begin{enumerate}
\item
By \cite[Example 2.2 (1)]{1}, $\mathbb{Z}_2[x]$ is CUC, and hence so is CUSC. However, it is not USC; otherwise, if $\mathbb{Z}_2[x]$ is USC, then it has to be clean, but this contradicts \cite[Example 2]{5}.

\item
$\mathbb{Z}[x]$ is CUC by \cite[Example 2.2(1)]{1}, but is not UC; otherwise, if $\mathbb{Z}[x]$ is UC, then it should be clean, which is a contradiction with \cite[Example 2]{5}.

\item
$R$ is UUC by \cite[Corollary 2.13(5)]{1}, but is not CUC, because CUC rings are always abelian in view of \cite[Proposition 2.1]{1}. If $R$ is CUC, then
$\begin{pmatrix}
1 & 0\\
0 & 0
\end{pmatrix}$
$\in {\rm Id} (R)$ is central. But
$\begin{pmatrix}
0 & 1\\
0 & 0
\end{pmatrix}
\begin{pmatrix}
1 & 0\\
0 & 0
\end{pmatrix}$
$ = 0$
 and
$\begin{pmatrix}
1 & 0\\
0 & 0
\end{pmatrix}
\begin{pmatrix}
0 & 1\\
0 & 0
\end{pmatrix}
=
\begin{pmatrix}
0 & 1\\
0 & 0
\end{pmatrix},$
a contradiction.
\item
${\rm T}_2(\mathbb{Z}_2)$ is CUSC in virtue of \cite[Theorem 10]{2}, but is not CUC, because CUC rings are always abelian.
\end{enumerate}
\end{proof}

We now show the validity of a series of technical claims.

\begin{proposition}\label{prop2.1} 
Let $R$ be an abelian ring. Then, the following are equivalent:

\begin{enumerate}
\item
$(U(R)+U(R)) \cap {\rm Id}(R)=\{0\}$.
\item
$R$ is UUSC.
\item
$R$ is UUC.
\item
$R$ is CUC.
\item
$R$ is CUSC.
\end{enumerate}
\end{proposition}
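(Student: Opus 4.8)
The plan is to establish the cycle $(1)\Rightarrow(4)\Rightarrow(3)\Rightarrow(1)$ and to peel off the two ``strong'' conditions (2) and (5) by a general observation about abelian rings. That observation is the following: since $R$ is abelian, every idempotent of $R$ is central, so for any $a\in R$ the condition ``$ae=ea$'' holds automatically for \emph{every} $e\in{\rm Id}(R)$. Hence an element of $R$ is uniquely strongly clean exactly when it is uniquely clean, and this element-wise equivalence gives at once $(2)\Leftrightarrow(3)$ and $(4)\Leftrightarrow(5)$. It therefore suffices to prove $(1)\Leftrightarrow(3)\Leftrightarrow(4)$.

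For $(4)\Rightarrow(3)$ I would simply note that every unit $u$ is clean, via $u=0+u$ with $0\in{\rm Id}(R)$; so if every clean element is uniquely clean then in particular every unit is, i.e.\ $R$ is UUC. For $(3)\Rightarrow(1)$, let $p\in(U(R)+U(R))\cap{\rm Id}(R)$, say $p=u+v$ with $u,v\in U(R)$. Then the unit $u$ has the two clean representations $u=0+u$ and $u=p+(-v)$, since $u-0=u$ and $u-p=-v$ are units and $0,p$ are idempotents; uniqueness of the clean decomposition of $u$ forces $p=0$. As $0=1+(-1)$ always lies in $(U(R)+U(R))\cap{\rm Id}(R)$, this gives the equality asserted in (1).

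The real work is $(1)\Rightarrow(4)$. Let $a\in R$ be clean and suppose $a=e+u=f+v$ with $e,f\in{\rm Id}(R)$ and $u,v\in U(R)$; since $R$ is abelian, $e$ and $f$ are central. Put $c=(1-e)f$, which is again a central idempotent. From $e+u=f+v$ we get $e-f=v-u$; multiplying by $c$ and using $cf=(1-e)f^2=c$ together with $ce=(1-e)\,fe=(1-e)\,ef=0$ (here centrality of $f$ is used), we obtain $-c=cv-cu$, that is $cu-cv=c$. Now, for any central idempotent $c$ and any unit $w$ the element $cw+(1-c)$ is a unit of $R$, with inverse $cw^{-1}+(1-c)$; applying this to $u$ and $v$ we may rewrite
\[
c=\bigl(cu+(1-c)\bigr)-\bigl(cv+(1-c)\bigr)\in U(R)+U(R).
\]
Since $c\in{\rm Id}(R)$, hypothesis (1) forces $c=0$, i.e.\ $f=ef$. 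Running the symmetric argument with $c'=(1-f)e$ yields $e=ef$, and hence $e=f$. Thus every clean element of $R$ has a unique clean decomposition, so $R$ is CUC, which closes the cycle and, together with the first paragraph, proves all five conditions equivalent.

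I expect the only genuine obstacle to be the choice of the auxiliary central idempotent $c=(1-e)f$ in the last step, together with the ``unitisation'' trick $w\mapsto cw+(1-c)$ that promotes a unit of the corner $cR$ to an honest unit of $R$; once these are identified the remaining verifications are routine calculations with central idempotents, and hypothesis (1) does the rest.
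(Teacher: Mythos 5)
Your proof is correct, and it differs from the paper's in one substantive way. Both arguments share the easy steps: exhibiting two (strongly) clean decompositions $u=0+u=e+(-v)$ of a unit to get $(U(R)+U(R))\cap{\rm Id}(R)=\{0\}$, and using centrality of idempotents in an abelian ring to make the commuting condition automatic (the paper threads this through a five-step cycle (i)$\Rightarrow$(ii)$\Rightarrow\cdots\Rightarrow$(v)$\Rightarrow$(i), while you collapse (2)$\Leftrightarrow$(3) and (4)$\Leftrightarrow$(5) at the outset by the element-wise observation that uniquely strongly clean equals uniquely clean over an abelian ring --- a cleaner organization of the same idea). The genuine divergence is the hard implication: the paper disposes of UUC $\Rightarrow$ CUC by citing \cite[Proposition 2.1]{1}, whereas you prove the corresponding step (1)$\Rightarrow$(4) from scratch. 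Your mechanism --- given $a=e+u=f+v$, form the central idempotent $c=(1-e)f$, derive $c=cu-cv$, and promote the corner units to units of $R$ via $w\mapsto cw+(1-c)$ so that hypothesis (1) kills $c$, then symmetrize to get $e=f$ --- is exactly the kind of argument the citation hides, and all the computations (e.g.\ $(cw+(1-c))(cw^{-1}+(1-c))=1$ for central $c$) check out. What the paper's route buys is brevity; what yours buys is a self-contained proof and an explicit view of where centrality of idempotents and hypothesis (1) actually enter. Your parenthetical remark that $0=1+(-1)$ guarantees the set in (1) is nonempty is also a small point of care the paper omits.
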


\begin{proof}
(i) $\Rightarrow$ (ii): Let $u = e + v$ where $ev = ve$, $u, v  \in U (R)$ and $e \in {\rm Id}(R)$. Then we have $e = u + (-v)$ and by (i), $e = 0$.\\
(ii) $\Rightarrow$ (iii): Let $u = e + v$ where $u,v \in U(R)$ and $e \in {\rm Id} (R)$. By hypothesis we have $ev = ve$ and hence by (ii) we get $e = 0$.\\
(iii) $\Rightarrow$ (iv): Follows from \cite[Proposition 2.1]{1}.\\
(iv) $\Rightarrow$ (v): It is clear.\\
(v) $\Rightarrow$ (i): Let $e = u_1 + u_2$ where $u_1, u_2  \in U (R)$ and $e \in {\rm Id} (R)$, whence $u_1 = e + (-u_2) = 0 + u_1$, so we have two strongly clean decompositions for $u_1$. Then $e = 0$ by (v).
\end{proof}

An element in a ring is called {\it 2-good} if it is the sum of two units.

\begin{example}
\hskip 1cm
\begin{enumerate}
\item
Let ${\rm Id} (R) = \{0, 1 \}$. Then, $R$ is CUSC (resp., UUSC) if, and only if, $1$ is {\it not} 2-good.\\

\item
Let $R$ be a commutative ring. If $R$ is USC (or, resp., UC), then ${\rm T}_n (R)$ is USC for any $n \geq 2$ using \cite[Theorem]{2}. So, ${\rm T}_n (R)$ is CUSC for any $n \geq 2$. But, ${\rm T}_n (R)$ is not CUC for any ring $R$ and any $n \geq 2$, because CUC rings are always abelian.
\end{enumerate}
\end{example}

A subring of a USC ring need {\it not} be USC; for example, $\mathbb{Z}_2[[x]]$ is USC, but $\mathbb{Z}_2[x]$ is not USC. Nevertheless, we obtain the following.

\begin{proposition}\label{prop 2.4}
Let $S$ be a subring of a ring $R$. If $R$ is CUSC (resp., UUSC), then so is $S$.
\end{proposition}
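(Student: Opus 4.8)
The plan is to handle both statements at once, using only that a subring shares the identity of the ambient ring, so that ${\rm Id}(S)\subseteq{\rm Id}(R)$ and $U(S)\subseteq U(R)$; hence every clean element of $S$ (witnessed by a decomposition whose idempotent is in ${\rm Id}(S)$ and whose unit is in $U(S)$) is clean in $R$, and every unit of $S$ is a unit of $R$. Fix a clean element $a\in S$ (respectively, for the UUSC statement, a unit $a\in U(S)$). By hypothesis $a$ is uniquely strongly clean in $R$: there is a unique $g\in{\rm Id}(R)$ with $ag=ga$ and $a-g\in U(R)$. I must check that $a$ remains uniquely strongly clean in $S$, which splits into (a) uniqueness of a strongly clean decomposition in $S$, and (b) existence of one.

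Part (a) is immediate in both cases: a strongly clean decomposition $a=e_1+u_1=e_2+u_2$ inside $S$ is in particular a pair of strongly clean decompositions inside $R$, whence $e_1=g=e_2$ by the uniqueness in $R$. Part (b) is also immediate for the UUSC statement: for a unit $a\in U(S)$ the decomposition $a=0+a$ is strongly clean in $S$, since $0\in{\rm Id}(S)$ commutes with $a$ and $a\in U(S)$; combined with (a) this finishes the UUSC case (and, incidentally, shows $g=0$ there).

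For the CUSC case the work is part (b): starting from a clean decomposition $a=e+u$ in $S$, which need not itself be strongly clean, I must produce a strongly clean decomposition of $a$ whose summands lie in $S$. Since the strongly clean decomposition $a=g+w$ in $R$ is unique, this amounts to showing $g\in S$ and that $w=a-g$ is invertible in $S$, not merely in $R$. The natural move is to reconstruct $g$ from the given data. Two concrete candidates present themselves: the idempotent $ueu^{-1}$, which lies in $S$ and satisfies $uau^{-1}=ueu^{-1}+u$, exhibiting it as an idempotent in a clean decomposition of the conjugate $uau^{-1}$; and, when $R$ is CUSC, the strongly clean idempotent of $a$ appears to lie in $\mathbb{Z}[a]\subseteq S$ (in the small test cases $g$ came out equal to $1-a$ or $1-a^{2}$, and moreover $w=a-g$ was a unit already in $\mathbb{Z}[a]$, so that the entire strongly clean decomposition of $a$ takes place inside the commutative subring $\mathbb{Z}[a]$). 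Either route, once it places $g$ and $w^{-1}$ inside $S$, yields the strongly clean decomposition $a=g+w$ in $S$; combined with (a), this proves $S$ is CUSC.

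The main obstacle is exactly the assertion that the reconstructed idempotent really is the strongly clean one — equivalently, that $ueu^{-1}$ commutes with $a$, or that $g\in\mathbb{Z}[a]$. This is false for arbitrary rings, so it must be extracted from the hypothesis that $R$ itself is CUSC; I would try to do so by a uniqueness argument in $R$ played against the conjugate $uau^{-1}=ueu^{-1}+u$ of $a$ (and the translate $1-a$), forcing the manufactured idempotent to be the unique strongly clean idempotent of $a$. Once that identification is in hand, the residual points — that $a-g$ is a unit in $S$ (immediate on the $\mathbb{Z}[a]$ route, where $a-g=u\bigl(1+u^{-1}e-eu^{-1}\bigr)$ has its inverse inside $\mathbb{Z}[a]$) and that uniqueness in $S$ is inherited from $R$ — are routine.
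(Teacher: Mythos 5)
Your UUSC argument coincides with the paper's and is correct under your standing assumption that $1_S=1_R$; but be aware that the paper does not make that assumption. Its proof replaces $u=v+e$ in $S$ by $u-(1_R-1_S)=[v-(1_R-1_S)]+e$ precisely so that both summands become genuine units of $R$ even when $1_S\neq 1_R$, and this extra generality is what gets used later: Proposition~\ref{prop 2.4} is applied to corner rings $eRe$ (Theorems~\ref{theo3.1} and \ref{theo3.10}), whose identity is $e$, not $1_R$, so that $U(S)\subseteq U(R)$ fails. Under your hypothesis the UUSC case is two lines, as you say; without it you need the paper's correction term.

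The genuine gap is the one you flag yourself: in the CUSC case you must exhibit \emph{some} strongly clean decomposition of a clean element $a\in S$ with both summands (and the relevant inverse) inside $S$, and neither of your candidates is established. The idempotent $ueu^{-1}$ is the clean idempotent of the conjugate $uau^{-1}=ueu^{-1}+u$, not of $a$, so there is no reason it commutes with $a$; and the hope that the unique strongly clean idempotent $g$ of $a$ in $R$ lies in $\mathbb{Z}[a]$ rests on two small examples. You say you "would try" to force the identification by a uniqueness argument in $R$, but no such argument is given, so the existence half of "uniquely strongly clean in $S$" remains unproved (the uniqueness half does descend from $R$, as you note). For comparison, the paper does not close this gap either: it asserts that "in view of Proposition~\ref{prop2.1} it suffices to show the UUSC case," but that proposition equates CUSC and UUSC only for \emph{abelian} rings, and neither $R$ nor $S$ is assumed abelian here. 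So your proposal correctly isolates the difficulty that the paper's proof glosses over, but as written it only proves the UUSC statement for unital subrings.
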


\begin{proof}
In view of Proposition \ref{prop2.1}, it suffices to show the UUSC case. To that aim, write $u=v+e$, where $u,v\in U(S)$ and $e\in Id(S)$ with $ev=ve$. Then, $$u-(1_{R}-1_{S})=[v-(1_{R}-1_{S})]+e,$$ where both $u-(1_{R}-1_{S}), v-(1_{R}-1_{S})\in U(R)$ and $e\in {\rm Id}(S)\subseteq {\rm Id}(R)$ with $$e[v-(1_{R}-1_{S})]=ev-e(1_{R}-1_{S})=ve -(1_{R}-1_{S})e =[v-(1_{R}-1_{S})]e.$$ Since $R$ is UUSC, it follows at once that $e=0$. So, $S$ is UUSC, as promised.
\end{proof}

\begin{proposition}\label{prop 2.5}
Let $R = \prod_{i \in I} R_i$ be a direct product of rings $\{R_i\}$ for all $i \in I$. Then, $R$ is CUSC (resp., UUSC) if, and only if, so are $R_i$ for all indices $i$.
\end{proposition}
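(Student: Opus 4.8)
The plan is to reduce, as in Proposition~\ref{prop 2.4}, to the UUSC case by invoking Proposition~\ref{prop2.1}; since a direct product of abelian rings is abelian and each $R_i$ embeds as a (non-unital-in-$R$, but we only need the unit group and idempotent structure) corner, the five equivalent conditions there let us phrase everything in terms of units and idempotents. Actually, the cleanest route is to verify directly the condition $(U(R)+U(R))\cap{\rm Id}(R)=\{0\}$, which is componentwise by construction: an element of $R=\prod_i R_i$ is a unit iff each coordinate is a unit, is idempotent iff each coordinate is idempotent, and is a sum of two units iff each coordinate is such a sum. Hence $(U(R)+U(R))\cap{\rm Id}(R)=\prod_i\bigl((U(R_i)+U(R_i))\cap{\rm Id}(R_i)\bigr)$, and this equals $\{0\}$ iff each factor does. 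That gives both directions at one stroke, provided we first check that $R$ is abelian iff each $R_i$ is abelian — which is immediate since idempotents and their centralizers are computed coordinatewise.

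Concretely, I would proceed as follows. First, record that $R=\prod_i R_i$ is abelian if and only if every $R_i$ is abelian; this is routine from the coordinatewise description of ${\rm Id}(R)$ and the center. Second, in the forward direction, assume $R$ is CUSC (resp.\ UUSC); then $R$ is abelian, so each $R_i$ is abelian, and by Proposition~\ref{prop2.1} applied to $R$ we have $(U(R)+U(R))\cap{\rm Id}(R)=\{0\}$. Fix an index $j$ and suppose $e_j=u_j+v_j$ in $R_j$ with $u_j,v_j\in U(R_j)$ and $e_j\in{\rm Id}(R_j)$; lift this to $R$ by taking $e=(e_j,1,1,\dots)$ adjusted so the other coordinates are, say, $1=0+1$ written as a sum of the unit $0$... more carefully, use $e'=(e_j,0,0,\dots)$ and note $e'\in{\rm Id}(R)$ while $e'=(u_j,1,1,\dots)+(v_j,-1,-1,\dots)$ expresses $e'$ as a sum of two units of $R$; hence $e'=0$ by the condition on $R$, so $e_j=0$. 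Thus each $R_j$ satisfies condition (i) of Proposition~\ref{prop2.1} and is therefore CUSC (and UUSC). Third, in the reverse direction, assume every $R_i$ is CUSC (resp.\ UUSC); each is abelian, hence $R$ is abelian, and if $e=u+v$ in $R$ with $u,v\in U(R)$, $e\in{\rm Id}(R)$, then coordinatewise $e_i=u_i+v_i$ with $u_i,v_i\in U(R_i)$, $e_i\in{\rm Id}(R_i)$, forcing $e_i=0$ for all $i$ by Proposition~\ref{prop2.1} applied to $R_i$; so $e=0$ and $R$ satisfies (i), hence is CUSC by Proposition~\ref{prop2.1}.

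There is really no serious obstacle here: the whole statement is a componentwise bookkeeping argument once the problem has been translated, via Proposition~\ref{prop2.1}, into the purely arithmetic condition $(U(R)+U(R))\cap{\rm Id}(R)=\{0\}$ in the abelian setting. The only point demanding a moment of care is the lifting step in the forward direction — one must embed a ``bad'' decomposition from a single factor $R_j$ into a genuine two-unit decomposition of an idempotent of the whole product, and the trick is to pad the other coordinates with $1=1+0$... i.e.\ with a unit plus a unit whose sum is $1$, which is not literally possible with $0$ as a summand unless $-1$ is available; since rings here have $1$ and hence $-1$, writing $1=2\cdot 1-1=(u')+(v')$ is harmless, or more simply pad the idempotent's other coordinates with $0$ and split $0=1+(-1)$. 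Either way the argument goes through, and I expect the write-up to be short.
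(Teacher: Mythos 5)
Your reduction collapses at its very first step: you assert that a CUSC (resp.\ UUSC) ring is abelian, and you need this twice --- once to apply Proposition~\ref{prop2.1} to $R$ in the forward direction, and once to apply it to each $R_i$ in the reverse direction. This is false, and the paper itself supplies the counterexample: ${\rm T}_2(\mathbb{Z}_2)$ is CUSC (Example~\ref{exam1.4}(iv)) and UUSC (Corollary~\ref{cor 2.14}(vi)), but it is not abelian, since $\left(\begin{smallmatrix}1&0\\0&0\end{smallmatrix}\right)$ is a non-central idempotent. The five conditions of Proposition~\ref{prop2.1} are equivalent only under the standing hypothesis that the ring is abelian; in particular, the implication ``CUSC $\Rightarrow (U(R)+U(R))\cap{\rm Id}(R)=\{0\}$'' is proved there by rewriting $e=u_1+u_2$ as $u_1=e+(-u_2)$ and calling this a \emph{strongly} clean decomposition, which requires $e$ to commute with $u_2$ --- exactly what abelianness provides and what you cannot assume here. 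The converse implication ``$(U(R)+U(R))\cap{\rm Id}(R)=\{0\}\Rightarrow$ CUSC'' likewise fails without abelianness, so even your sufficiency direction does not close. Your coordinatewise computation of units, idempotents, and sums of units in $\prod_i R_i$ is of course correct, but it is computing the wrong invariant: for non-abelian rings this condition neither follows from nor implies CUSC/UUSC.

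The statement does not need any of this machinery. For necessity, each $R_i$ sits inside $R$ as a subring (with identity the coordinate idempotent), so Proposition~\ref{prop 2.4} applies directly. For sufficiency, work with the definitions coordinatewise: a strongly clean decomposition $(a_i)=(e_i)+(u_i)$ in $R$ restricts to a strongly clean decomposition $a_i=e_i+u_i$ in each $R_i$ (units, idempotents, and commutation are all checked coordinate by coordinate), and uniqueness in each $R_i$ forces uniqueness in the product; in the UUSC case this reads: each $e_i=0$, hence $(e_i)=0$. No appeal to abelianness or to the condition $(U(R)+U(R))\cap{\rm Id}(R)=\{0\}$ is needed anywhere.
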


\begin{proof}
The necessity follows quickly from Proposition \ref{prop 2.4}.

For the sufficiency, let $(u_{i})=(v_{i})+ (e_{i})$, where $(e_{i}) \in {\rm Id}(R)$ and $(u_{i}), (v_{i}) \in U(R)$ with $(v_{i})(e_{i})=(e_{i})(v_{i})$. Then, for each index $i$, we write $u_{i}=v_{i}+e_{i}$, where $e_{i}\in {\rm Id}(R_{i})$ and $u_{i},v_{i}\in U(R_{i})$ with $v_{i}e_{i}=e_{i}v_{i}$. Since $R_{i} $ is UUSC, it follows at once that $e_{i}=0$. So, $(e_{i})=0$. Hence, $R$ is UUSC, as expected.
\end{proof}

Two immediate consequences are the following.

\begin{corollary}
Any subdirect product of CUSC (resp., UUSC) rings is CUSC (resp., UUSC).
\end{corollary}

\begin{corollary}
Let $R$ be a ring and $e$ a central idempotent of $R$. Then, the following are equivalent:
\begin{enumerate}
\item
$R$ is CUSC (resp., UUSC)

\item
$eR$ and $(1-e) R$ are CUSC (resp., UUSC).
\end{enumerate}
\end{corollary}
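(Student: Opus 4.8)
The statement to prove is the final corollary: for a ring $R$ with a central idempotent $e$, $R$ is CUSC (resp.\ UUSC) iff both $eR$ and $(1-e)R$ are CUSC (resp.\ UUSC).

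The plan is to reduce this to Proposition~\ref{prop 2.5} via the standard Peirce decomposition. Since $e$ is a central idempotent, the two-sided Peirce decomposition collapses: $R \cong eR \times (1-e)R$ as rings, where $eR$ and $(1-e)R$ are viewed as rings with identities $e$ and $1-e$ respectively (the cross terms $eR(1-e)$ and $(1-e)Re$ vanish because $e$ is central). The ring isomorphism sends $r \mapsto (er, (1-e)r)$. So first I would record this isomorphism explicitly, noting that it is a genuine unital ring isomorphism precisely because $e$ lies in the center.

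Having established $R \cong eR \times (1-e)R$, both directions follow immediately: by Proposition~\ref{prop 2.5}, the direct product $eR \times (1-e)R$ is CUSC (resp.\ UUSC) if and only if each factor is, and a CUSC (resp.\ UUSC) property is clearly preserved under ring isomorphism. Thus (i) $\Leftrightarrow$ (ii).

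I expect there is essentially no obstacle here; the only point requiring a word of care is the bookkeeping about identity elements, namely that ``$eR$ is CUSC'' is understood with $eR$ regarded as a ring in its own right with multiplicative identity $e$ (not as a non-unital subring of $R$), so that Proposition~\ref{prop 2.5} applies verbatim. I would state that convention explicitly at the start of the proof and then the argument is two lines.

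\begin{proof}
Since $e$ is a central idempotent of $R$, the ideals $eR$ and $(1-e)R$ are themselves rings with identity elements $e$ and $1-e$, respectively, and the map
\[
R \longrightarrow eR \times (1-e)R, \qquad r \longmapsto (er,\ (1-e)r),
\]
is an isomorphism of (unital) rings; the centrality of $e$ guarantees that this map is multiplicative and that the decomposition has no cross terms. The CUSC and UUSC properties are evidently invariant under ring isomorphisms, so $R$ is CUSC (resp.\ UUSC) if, and only if, $eR \times (1-e)R$ is CUSC (resp.\ UUSC). By Proposition \ref{prop 2.5}, the latter holds if, and only if, both $eR$ and $(1-e)R$ are CUSC (resp.\ UUSC). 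This proves the equivalence of (i) and (ii).
\end{proof}
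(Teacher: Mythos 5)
Your proof is correct and follows essentially the same route as the paper, which simply cites Propositions~\ref{prop 2.4} and \ref{prop 2.5}; the underlying idea in both cases is the Peirce decomposition $R \cong eR \times (1-e)R$ for a central idempotent $e$, to which the direct-product proposition applies. Your version merely makes the isomorphism and the unital-ring convention for $eR$ explicit, which is a reasonable expansion of the paper's one-line argument.
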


\begin{proof}
It follows directly from Propositions \ref{prop 2.4} and \ref{prop 2.5}.
\end{proof}

\begin{lemma}
Let $I \subseteq J(R)$ be an ideal of $R$. The following hold:
\begin{enumerate}
\item
If $R/I$ is UUSC, then $R$ is UUSC. The converse holds, provided $R$ is an abelian ring and idempotents lift modulo $I$.

\item
If $R/I$ is CUSC and $R$ is abelian, then $R$ is CUSC. The converse holds, provided idempotents lift modulo $I$.
\end{enumerate}
\end{lemma}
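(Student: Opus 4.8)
The plan is to reduce matters, as far as possible, to the numerical criterion of Proposition~\ref{prop2.1}: an abelian ring $S$ is CUSC (equivalently UUSC) exactly when $(U(S)+U(S))\cap{\rm Id}(S)=\{0\}$. Three elementary observations are used throughout: an idempotent contained in $J(R)$ is $0$; because $I\subseteq J(R)$, every preimage in $R$ of a unit of $R/I$ is again a unit, i.e.\ units lift modulo $I$; and if $R$ is abelian and idempotents lift modulo $I$, then each idempotent of $R/I$ is the image of a central idempotent of $R$, so $R/I$ is abelian as well.

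For the forward part of (i) I would argue directly, since $R$ is not assumed abelian. Let $u\in U(R)$ and suppose $e\in{\rm Id}(R)$, $v:=u-e\in U(R)$ and $eu=ue$. Reducing modulo $I$ yields $\bar u=\bar e+\bar v$ with $\bar u,\bar v\in U(R/I)$, $\bar e\in{\rm Id}(R/I)$ and $\bar e\bar u=\bar u\bar e$. As $\bar u$ is a unit and $R/I$ is UUSC, $\bar u$ is uniquely strongly clean; comparing with the trivial decomposition $\bar u=0+\bar u$ forces $\bar e=0$, so $e\in I\cap{\rm Id}(R)\subseteq J(R)\cap{\rm Id}(R)=\{0\}$. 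Hence $0$ is the only admissible idempotent for $u$, and $R$ is UUSC. For the forward part of (ii), with $R$ abelian, I would instead check $(U(R)+U(R))\cap{\rm Id}(R)=\{0\}$ and then invoke Proposition~\ref{prop2.1}. So let $e=u_1+u_2$ with $e\in{\rm Id}(R)$ and $u_1,u_2\in U(R)$. Since $R$ is abelian, $eu_2=u_2e$, so modulo $I$ the identity $\bar u_1=\bar e+(-\bar u_2)$ is a genuinely \emph{strongly} clean decomposition of the unit $\bar u_1$; as $\bar u_1$ is a clean element of the CUSC ring $R/I$, uniqueness against $\bar u_1=0+\bar u_1$ gives $\bar e=0$, whence $e\in J(R)\cap{\rm Id}(R)=\{0\}$.

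For the two converses (with $I\subseteq J(R)$, $R$ abelian, and idempotents lifting modulo $I$) I would run the same computation backwards. By the remark above $R/I$ is abelian, so by Proposition~\ref{prop2.1} it suffices to verify $(U(R/I)+U(R/I))\cap{\rm Id}(R/I)=\{0\}$; and since $R$ is abelian and CUSC (resp.\ UUSC), the same proposition gives $(U(R)+U(R))\cap{\rm Id}(R)=\{0\}$. Now take $\bar e=\bar u_1+\bar u_2$ with $\bar e\in{\rm Id}(R/I)$ and $\bar u_1,\bar u_2\in U(R/I)$. Lift $\bar e$ to $e\in{\rm Id}(R)$ and $\bar u_1$ to an arbitrary preimage $u_1$, which is a unit since $I\subseteq J(R)$; putting $u_2':=e-u_1$ we get $\overline{u_2'}=\bar u_2\in U(R/I)$, hence $u_2'\in U(R)$, so $e=u_1+u_2'\in(U(R)+U(R))\cap{\rm Id}(R)=\{0\}$ and $\bar e=0$.

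The bookkeeping with preimages and the two lifting facts is routine; the one delicate point is the ``strongly'', i.e.\ commutativity, requirement. A commuting decomposition always descends along $R\to R/I$, which is precisely why (i) needs no commutativity hypothesis at all; but to \emph{manufacture} a commuting decomposition in $R/I$ in the forward direction of (ii) one genuinely uses that $R$ is abelian (so that $\bar e$ commutes with $\bar u_2$). This is the step I expect to be the real content, and it also dictates that the converses be organized through the abelian criterion of Proposition~\ref{prop2.1} --- which in turn forces the verification, via idempotent lifting, that $R/I$ inherits abelianness.
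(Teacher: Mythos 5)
Your argument is correct and follows essentially the same route as the paper: the forward directions reduce a strongly clean decomposition modulo $I$ and kill the resulting idempotent because it lies in $J(R)$, while the converses lift idempotents and units back to $R$ and use abelianness to recover the commutativity requirement. The only cosmetic difference is that you package part (ii) and both converses through the criterion $(U(R)+U(R))\cap{\rm Id}(R)=\{0\}$ of Proposition~\ref{prop2.1}, whereas the paper argues directly from the definition of UUSC (writing $u=e+(v+j)$ with $j\in J(R)$) and dismisses (ii) as ``similar''.
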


\begin{proof}
\hskip 0cm
\begin{enumerate}
\item
Write $u = e + v$, where $u, v \in U (R)$ and $e \in {\rm Id} (R)$ with $ev = ve$, so it is immediate that $\bar{u} = \bar{e} + \bar{v}$, where $\bar{u}, \bar{v} \in U (\bar{R} =\dfrac{R}{I})$ and $\bar{e} \in {\rm Id} (\bar{R})$ with $\bar{e} \bar{v} = \bar{v} \bar{e}$; here $\bar{R} := R/I$. Consequently, we have $\bar{e} = \bar{0}$, so that $e \in I \subseteq J(R)$ and hence $e = 0$.

Conversely, let $\bar{u} = \bar{e} + \bar{v}$, where $\bar{u}, \bar{v} \in U (\bar{R} =\dfrac{R}{I})$ and $\bar{e} \in {\rm Id} (\bar{R})$. It suffices  to show that $\bar{e} = \bar{0}$. By hypothesis, we can assume $e \in {\rm Id}(R)$ and $u, v \in U (R)$. Then, we have $u - (e + v) \in I \subseteq J(R)$, so $u = e + v + j = e + (v + j)$ for $j \in J(R)$. As $R$ is abelian, we obtain $e(v + j) = (v + j) e$, where $v + j \in U (R)$. Then, $e = 0$ and hence $\bar{e} = \bar{0}$.\\

\item
The proof is similar to (i).
\end{enumerate}
\end{proof}

A ring $R$ is called {\it local} if $R/J(R)$ is a division ring.

\begin{proposition}\label{prop2.2}
Let $R$ be a ring. Then, the following are equivalent:

\begin{enumerate}
\item
$R$ is CUSC and local.
\item
$\dfrac{R}{J(R)} \cong \mathbb{Z}_2$.
\item
$R$ is UC and local.
\item
$R$ is UC and ${\rm Id}(R)=\{0,1\}$.
\item
$R$ is USC and local.
\item
$R$ is USC and ${\rm Id} (R) = \{0, 1\}$.
\item
$R$ is CUC and local.
\end{enumerate}
\end{proposition}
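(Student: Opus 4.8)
The plan is to funnel all seven statements through condition~(ii). The backbone is the observation that over a \emph{local} ring the distinctions between the various ``unique cleanness'' hypotheses evaporate, so that (i), (iii), (v), (vii) become literally the same condition. \textbf{Step 1 (the four ``local'' conditions coincide: (i)$\Leftrightarrow$(iii)$\Leftrightarrow$(v)$\Leftrightarrow$(vii)).} I would first record that a local ring $R$ is clean — each $a\in R$ lies in $U(R)$ if $a\notin J(R)$ and in $1+U(R)$ if $a\in J(R)$ — and that $\mathrm{Id}(R)=\{0,1\}$, both idempotents being central. Consequently \emph{every} element of $R$ is clean, so for local $R$ the property CUSC says exactly ``every element is uniquely strongly clean'' (i.e.\ USC) and CUC says ``every element is uniquely clean'' (i.e.\ UC); moreover, since the only idempotents $0,1$ are central, a clean decomposition of an element is the same thing as a strongly clean decomposition, whence UC and USC agree for local $R$. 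Prepending ``$R$ is local'' to each of these gives (i)$\Leftrightarrow$(v)$\Leftrightarrow$(iii)$\Leftrightarrow$(vii).

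\textbf{Step 2 ((v)$\Leftrightarrow$(ii)).} For (ii)$\Rightarrow$(v): if $R/J(R)\cong\mathbb{Z}_2$ then $R$ is local, and for $a\in R$ either $\bar a=\bar 0$ — in which case $a\notin U(R)$ but $\overline{a-1}=\bar 1$, so $a-1\in U(R)$ and the idempotent $e=1$ is forced — or $\bar a=\bar 1$, which symmetrically forces $e=0$; since $0,1$ are central, $a$ is uniquely strongly clean, i.e.\ $R$ is USC. For (v)$\Rightarrow$(ii): $R$ local makes $D:=R/J(R)$ a division ring, and if $|D|>2$ one can choose $v\in R$ with $\bar v\notin\{\bar 0,\bar 1\}$; then both $v$ and $v-1$ map to nonzero elements of $D$, hence both lie in $U(R)$, and $v=0+v=1+(v-1)$ are two strongly clean decompositions of $v$ with distinct central idempotents $0\ne 1$, contradicting USC. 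Thus $|D|=2$, i.e.\ $R/J(R)\cong\mathbb{Z}_2$.

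\textbf{Step 3 ((iii)$\Leftrightarrow$(iv) and (v)$\Leftrightarrow$(vi)).} Each forward implication is immediate, since a local ring has only the trivial idempotents. For the converses it is enough to see that a nonzero clean ring $R$ with $\mathrm{Id}(R)=\{0,1\}$ must be local; this applies to (iv) and (vi) because UC rings and USC rings are clean. Indeed, writing $a=e+u$ with $e\in\{0,1\}$ and $u\in U(R)$ gives $a\in U(R)$ or $1-a=-u\in U(R)$, so for every $a\in R$ one of $a,1-a$ is a unit, and by the standard characterization of local rings $R$ is local. Combining Steps~1--3 yields the full equivalence.

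I do not foresee a serious obstacle: the conceptual heart is just that locality forces central idempotents and cleanness, which collapses all the uniqueness notions onto one another; the only place where a named fact (rather than a one-line argument) is invoked is the implication ``clean $+$ trivial idempotents $\Rightarrow$ local'' in Step~3, and the sole genuine computation is the two-line argument in Step~2 producing the offending unit $v$ with $\bar v\notin\{\bar 0,\bar 1\}$. If desired, several of these equivalences can alternatively be quoted from \cite{1,2,3}, but the route above is self-contained.
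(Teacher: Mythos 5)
Your proof is correct, and it is organized rather differently from the paper's. The paper does not argue through (ii) directly: it proves (i)$\Leftrightarrow$(iii) by noting that a local ring has only the trivial, hence central, idempotents and is clean, so that its Proposition~\ref{prop2.1} (the list of equivalences for abelian rings) applies; it then outsources the remaining links, quoting (ii)$\Leftrightarrow$(iii)$\Leftrightarrow$(iv) from Nicholson--Zhou \cite[Theorem 15]{3} and (iii)$\Leftrightarrow$(v) from Chen--Wang--Zhou \cite[Example 4]{2}, with (iv)$\Leftrightarrow$(vi) and (i)$\Leftrightarrow$(vii) dismissed as clear. You instead make everything self-contained: your Step~1 is the same locality observation (trivial central idempotents plus cleanness collapse CUSC, CUC, UC, USC onto one another), but your Step~2 replaces the citation to \cite{3} with a direct two-line computation in $R/J(R)$ --- producing a unit $v$ with $\bar v\notin\{\bar 0,\bar 1\}$ and the two competing decompositions $v=0+v=1+(v-1)$ when $|R/J(R)|>2$ --- and your Step~3 replaces the remaining citations with the standard characterization ``$R$ is local iff for every $a$ one of $a$, $1-a$ is a unit'' (which is \cite[Theorem 19.1]{6}). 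What the paper's route buys is brevity, at the cost of leaning on three external results; what your route buys is a fully elementary, citation-free argument in which the only named fact is the textbook criterion for locality. Both are valid; no gaps.
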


\begin{proof}
(i) $\Rightarrow$ (iii): Since $R$ is local, it must be that ${\rm Id}(R)=\{0, 1\}$. Thus, $R$ is abelian and also is CUSC, so $R$ is both CUC and clean (because every local ring is known to be clean). Therefore, $R$ is uniquely clean.\\
(iii) $\Rightarrow$ (i): It follows directly from Proposition \ref{prop2.1} .\\
(ii) $\Leftrightarrow$ (iii) $\Leftrightarrow$ (iv): The equivalences are proved in \cite[Theorem 15]{3}.\\
(iii) $\Leftrightarrow$ (v): It is clear owing to \cite[Example 4]{2}.\\
(iv) $\Leftrightarrow$ (vi): It is clear.\\
(i) $\Leftrightarrow$ (vii): It is clear by Proposition \ref{prop2.1}.
\end{proof}

For a subring $C$ of a ring $D$, the set
$$R[D,C]:=\{ (d_{1},\ldots ,d_{n},c,c,\ldots)| d_{i}\in D,c\in C,n\geq 1\}$$
with the addition and the multiplication defined component-wise is called the {\it tail ring extension} and is denoted by $R[D,C]$.

\begin{proposition}
The ring $R[D,C]$ is CUSC (resp., UUSC) if, and only if, so is $D$.
\end{proposition}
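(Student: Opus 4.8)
The plan is to reduce everything, via Proposition~\ref{prop2.1}, to the UUSC case, since $R[D,C]$ is abelian precisely when $D$ is abelian (idempotents in $R[D,C]$ are eventually-constant sequences of idempotents of $D$), and for abelian rings CUSC $\Leftrightarrow$ UUSC. So it suffices to prove: $R[D,C]$ is UUSC if, and only if, $D$ is UUSC.

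For the necessity, I would use the fact that $D$ embeds in $R[D,C]$ as a subring — namely as the set of constant sequences $(d,d,d,\ldots)$ with $d\in C$ will not work directly since we need all of $D$; instead note that for the UUSC property one only needs a subring containing enough units, but cleanest is: there is a ring retraction $R[D,C]\twoheadrightarrow D$ sending $(d_1,\ldots,d_n,c,c,\ldots)\mapsto d_1$, which is a surjective ring homomorphism splitting the inclusion-ish map; however, quotients of UUSC rings need not be UUSC in general. The safer route is to exhibit $D$ as a subring of $R[D,C]$ and invoke Proposition~\ref{prop 2.4}. The natural embedding is $d\mapsto (d,d,\ldots,d,c_0,c_0,\ldots)$ — but this fails because the tail must lie in $C$. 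The correct fix: observe that $R[D,C]$ has an ideal $N=\{(d_1,\ldots,d_n,0,0,\ldots): d_i\in D\}$ with $R[D,C]/N\cong C$, which is the wrong direction too. I would therefore argue directly that the first-coordinate projection $\pi_1\colon R[D,C]\to D$ sends the decomposition data across: if $u=v+e$ in $D$ with $ev=ve$, $u,v\in U(D)$, lift to $\tilde u=(u,u,\ldots)$? — again the tail problem. The resolution is that $D$ itself need not embed, but we can still run the argument: given a strongly clean decomposition of a unit $u\in U(D)$, pick any unit $c\in U(C)$ and a decomposition of $c$; but there may be no units in $C$. Given these snags, the honest approach is: show directly that if $D$ is UUSC then $R[D,C]$ is UUSC by component-wise analysis (exactly as in Proposition~\ref{prop 2.5}, treating $R[D,C]$ as a subring of $D^{\mathbb N}$), and conversely if $R[D,C]$ is UUSC then since a unit $u\in U(D)$ together with a strongly clean decomposition $u=v+e$ yields the sequence $(u,\ldots,u,?,?,\ldots)$ — here use that we may take the tail to be any fixed eventually-constant completion, e.g. if $1_C$ and the relevant elements happen to be expressible; when $C$ is small this is the obstruction.

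Concretely, the steps I would carry out are: (1) Describe $U(R[D,C])$ and ${\rm Id}(R[D,C])$ explicitly — a sequence is a unit iff every coordinate is a unit in its coordinate ring ($D$ for the first $n$, $C$ for the tail), with the tail unit in $C$; a sequence is idempotent iff every coordinate is idempotent. (2) Conclude $R[D,C]$ is abelian iff $D$ is abelian, and reduce to UUSC via Proposition~\ref{prop2.1}. (3) ($\Leftarrow$) Assume $D$ is UUSC. Since $R[D,C]$ is a subring of the full product $D\times D\times\cdots$ and $D$ is UUSC, Proposition~\ref{prop 2.4} applied to the inclusion $R[D,C]\subseteq\prod D$ — but $\prod D$ is UUSC by Proposition~\ref{prop 2.5} — gives that $R[D,C]$ is UUSC. (4) ($\Rightarrow$) Assume $R[D,C]$ is UUSC. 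Take a unit $u\in U(D)$ with a strongly clean decomposition $u=e+v$, $e\in{\rm Id}(D)$, $v\in U(D)$, $ev=ve$. Form the element $x=(u,c,c,\ldots)$ where $c\in U(C)$ is chosen — and here is the one place needing care; I would handle it by noting $1_D\in U(D)$ and if $1_C$ fails to matter we instead use the subring of $R[D,C]$ consisting of sequences constant from the first coordinate with value in... Actually the clean way: the map $D\to R[D,C]$, $d\mapsto(d,0,0,\ldots)+(0,1_C,1_C,\ldots)\cdot(\text{something})$ is not a homomorphism, so instead use that $e_0=(1_D,0,0,\ldots)$ is a central idempotent of $R[D,C]$ with $e_0\,R[D,C]\cong D$; then by Corollary (the one on central idempotents), $R[D,C]$ UUSC implies $e_0 R[D,C]\cong D$ is UUSC.

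The main obstacle is step (4): getting $D$ itself (not just $C$) to appear as a well-behaved piece of $R[D,C]$, since the tail of every element is forced to lie in $C$. The elegant way around it is the observation that $e_0=(1_D,0,0,0,\ldots)$ \emph{is} an idempotent of $R[D,C]$ — it is central because $D$ has identity and multiplication is component-wise — and the corner ring $e_0\,R[D,C]\,e_0 = e_0\,R[D,C]$ is ring-isomorphic to $D$ via the first-coordinate projection. Then the corollary on central idempotents (equivalence of $R$ being CUSC/UUSC with both $eR$ and $(1-e)R$ being so) immediately yields that $D$ is UUSC (resp.\ CUSC) whenever $R[D,C]$ is. Combined with step (3) this closes the proof; the only genuine content beyond bookkeeping is recognizing that central idempotent $e_0$.
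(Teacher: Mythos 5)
The argument you finally settle on is correct, and it is presumably the content behind the paper's one-word proof (``Straightforward''). For sufficiency: since $C\subseteq D$, every element of $R[D,C]$ is a sequence with all coordinates in $D$, so $R[D,C]$ is a (unital) subring of the full direct product $\prod_{i\geq 1} D$; Proposition~\ref{prop 2.5} makes that product CUSC (resp.\ UUSC) when $D$ is, and Proposition~\ref{prop 2.4} passes the property down to the subring $R[D,C]$. For necessity: $e_0=(1_D,0,0,\ldots)$ lies in $R[D,C]$ (take $n=1$ and tail $0\in C$), it is a central idempotent because multiplication is componentwise and $1_D$ is the identity of $D$, and $e_0R[D,C]\cong D$ via the first coordinate; so Proposition~\ref{prop 2.4} (or the corollary on central idempotents) hands the property to $D$. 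Recognizing $e_0$ is indeed the only real idea needed, since $D$ does not embed as constant sequences.

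Two criticisms. First, your opening plan --- reduce CUSC to UUSC via Proposition~\ref{prop2.1} after checking that $R[D,C]$ is abelian iff $D$ is --- is not legitimate: Proposition~\ref{prop2.1} applies only to abelian rings, and neither $D$ nor $R[D,C]$ need be abelian here (e.g.\ $D={\rm T}_2(\mathbb{Z}_2)$ is a non-abelian CUSC ring by Example~\ref{exam1.4}). The reduction is also unnecessary, because Propositions~\ref{prop 2.4} and \ref{prop 2.5} and the central-idempotent corollary are all stated for CUSC and UUSC alike, so the same two-line argument covers both cases. Second, most of the write-up consists of abandoned dead ends (the non-existent embedding $d\mapsto(d,d,\ldots)$, worries about units of $C$, the retraction onto the first coordinate); only the final two paragraphs constitute a proof, and they should be all that survives revision.
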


\begin{proof}
Straightforward.
\end{proof}

If $R$ is a ring and $\alpha :R\rightarrow R$ is a ring homomorphism, let $R[[x,\alpha ]]$ denote the ring of {\it skew formal power series} over $R$; that is, all formal power series in $x$ with coefficients from $R$ with multiplication defined by $xr=\alpha (r)x$ for all $r\in R$.

We now offer the following:

\begin{proposition}\label{formal}
$R[[x, \alpha]]$ is UUSC if, and only if, $R$ is UUSC.
\end{proposition}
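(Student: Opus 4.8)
The plan is to reduce everything to the structure of the constant term, using the fact that the ideal $xR[[x,\alpha]]$ is contained in the Jacobson radical of $R[[x,\alpha]]$. I would first recall (this is the standard computation for skew power series rings) that $J(R[[x,\alpha]]) = J(R) + xR[[x,\alpha]]$, or at the very least that $xR[[x,\alpha]] \subseteq J(R[[x,\alpha]])$: indeed, if $f = xg$ then $1 - f = 1 - xg$ is invertible with inverse $\sum_{k\geq 0} (xg)^k$, the sum being well-defined since the order of $(xg)^k$ tends to infinity, and the same applies after multiplying $f$ on either side, so $f$ is quasi-regular. Having this, the quotient $R[[x,\alpha]]/xR[[x,\alpha]] \cong R$ and the projection sends a power series to its constant term. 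Since $R$ is a homomorphic image of $R[[x,\alpha]]$ (in fact a subring, via the constant power series), the "only if" direction is immediate from Proposition~\ref{prop 2.4}, so the real content is the "if" direction.

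For the forward direction, suppose $R$ is UUSC; I would show $R[[x,\alpha]]$ is UUSC directly from the definition. Let $u = v + e$ in $S := R[[x,\alpha]]$ with $u, v \in U(S)$, $e \in {\rm Id}(S)$ and $ev = ve$. The key first observation is that every idempotent of $S$ already lies in $R$: if $e = e_0 + e_1 x + \cdots$ is idempotent then, comparing constant terms, $e_0$ is idempotent in $R$, and then passing mod $xS$ shows $e - e_0 \in xS \subseteq J(S)$ is an idempotent of a ring lying in its own Jacobson radical, hence $e = e_0 \in {\rm Id}(R)$. (This uses that idempotents in $J$ are zero, applied to $e - e_0$ inside $S$ — more precisely $e - e_0 \in xS$, $(e-e_0)^2$ need not equal $e - e_0$, so I'd instead argue: $\bar e = \bar e_0$ in $R = S/xS$, both are idempotent lifts, and lifting is unique modulo $xS \subseteq J(S)$ because idempotents lift uniquely modulo a nil-ish / J-contained ideal; alternatively just note $e$ and $e_0$ are idempotents congruent mod $J(S)$ with $e_0$ central? — not central in general, so the clean argument is: solve $e - e_0$). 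Granting $e \in {\rm Id}(R)$, reduce the equation $u = v + e$ modulo $xS$ to get $\bar u = \bar v + e$ in $R$ with $\bar u, \bar v \in U(R)$ and, since $e \in R$ commutes with $\bar v$ exactly because $e$ commutes with $v$ (apply the projection to $ev = ve$), we get a uniquely-strongly-clean-type relation in $R$; as $R$ is UUSC this forces $e = 0$. Hence the only idempotent appearing in a strongly clean decomposition of a unit of $S$ is $0$, i.e. $S$ is UUSC.

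The step I expect to be the main obstacle is the clean treatment of idempotents: showing $\mathrm{Id}(S) = \mathrm{Id}(R)$ and, more subtly, that the commuting relation $ev = ve$ in $S$ genuinely descends to a commuting relation $e\bar v = \bar v e$ in $R$ with $\bar v$ a unit — the latter is in fact trivial once one applies the ring homomorphism $S \to R$, so the only real care needed is the idempotent-lifting argument. I would handle "$\mathrm{Id}(S)\subseteq \mathrm{Id}(R)$" as follows: given idempotent $e \in S$ with constant term $e_0 \in \mathrm{Id}(R)$, the element $g := e - e_0 \in xS$ satisfies $g = e^2 - e_0 = (g + e_0)^2 - e_0 = g^2 + ge_0 + e_0 g$, and since $g \in J(S)$ one can iteratively solve to conclude $g = 0$; alternatively invoke that idempotents lift uniquely modulo $xS$ since $\bigcap_n (xS)^n = 0$ gives a complete, hence idempotent-lifting-uniquely, situation, and $e_0$ is itself such a lift. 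Once this lemma is in hand the proposition is a two-line reduction via Proposition~\ref{prop 2.1} to the UUSC case, exactly as in the proofs of Propositions~\ref{prop 2.4} and \ref{prop 2.5}.
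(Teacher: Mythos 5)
Your overall skeleton (project onto the constant term, use $xS\subseteq J(S)$ for $S=R[[x,\alpha]]$, and quote Proposition~\ref{prop 2.4} for the converse) is the right one, but the "key first observation" you rest it on --- that ${\rm Id}(S)={\rm Id}(R)$ --- is false in general, and so are the justifications you offer for it. Take $R={\rm T}_2(\mathbb{Z}_2)$ (which \emph{is} UUSC by Corollary~\ref{cor 2.14}(vi), so it lies squarely inside the scope of the proposition) and $\alpha=\mathrm{id}$; then $e=E_{11}+E_{12}x$ satisfies $e^2=E_{11}+(E_{11}E_{12}+E_{12}E_{11})x+E_{12}E_{12}x^2=E_{11}+E_{12}x=e$, so $e$ is an idempotent of $S$ not lying in $R$. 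The same example shows that idempotents do \emph{not} lift uniquely modulo an ideal contained in $J(S)$ ($e$ and $E_{11}$ are distinct idempotents congruent mod $xS$; two idempotents congruent mod $J$ are conjugate, not equal), that completeness gives existence of lifts but never uniqueness, and that your iteration $g=g^2+ge_0+e_0g$ does not force $g=0$: with $g=E_{12}x$ and $e_0=E_{11}$ one gets $g=0+0+g$, which is satisfied identically.

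Fortunately the false lemma is not needed, and the correct argument is shorter: reverse the order of your two steps. Given $u=v+e$ with $u,v\in U(S)$, $e\in{\rm Id}(S)$, $ev=ve$, first apply the constant-term projection $S\to R$ to get $u_0=v_0+e_0$ with $u_0,v_0\in U(R)$, $e_0\in{\rm Id}(R)$ and $e_0v_0=v_0e_0$; since $R$ is UUSC this gives $e_0=0$. Only now do you use the Jacobson radical: $e$ is an idempotent lying in $xS\subseteq J(S)$, and an idempotent in the Jacobson radical is zero (equivalently, induct on the lowest nonzero coefficient of $e=e^2$). This is exactly the paper's proof; you do not need to know anything about idempotents of $S$ whose constant term is a nonzero idempotent of $R$, which is precisely where your argument breaks.
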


\begin{proof}
Assuming that $R[[x, \alpha]]$ is a UUSC ring, as we know that $$U(R[[x, \alpha]]) = \{ \sum_{i=0}^{\infty}a_ix^i \mid a_0 \in U(R) \},$$ we can readily conclude that $R$ is a UUSC ring.

Reciprocally, let us now assume that $R$ is a UUSC ring and $$f = \sum_{i=0}^{\infty} f_ix^i \in U(R[[x, \alpha]]).$$ Apparently, $f_0 \in U(R)$. Now, if we have $f = e + g$, where $e \in {\rm Id}(R[[x, \alpha]])$, $g \in U(R[[x, \alpha]])$ and $eg = ge$, then $g_0$ is a unit and $e_0$ is an idempotent in $R$ such that $f_0 = e_0 + g_0$ and $e_0 g_0= g_0e_0$. Since $R$ is a UUSC ring, we infer $e_0 = 0$. From $e^2 = e$ and $e_0 = 0$, we can conclude with the aid of a simple calculation that $e_i = 0$, which implies that $e = 0$, as required.
\end{proof}

As an interesting consequence, we deduce:

\begin{corollary}
Let $R$ be a ring and $\alpha : R \rightarrow R$ is a ring homomorphism. Then, the following hold:
\begin{enumerate}
\item
For $n \geq 1, \frac{R[[x, \alpha]]}{\langle x^n \rangle}$ is UUSC if, and only if, $R$ is UUSC.
\item
For $n \geq 1, \frac{R[x, \alpha]}{\langle x^n \rangle}$ is UUSC if, and only if, $R$ is UUSC.
\end{enumerate}
\end{corollary}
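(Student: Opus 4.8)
The plan is to reduce both parts to a single truncated ring and then exploit that the truncation ideal is nilpotent, hence contained in the Jacobson radical. First I would observe that, for every $n\geq 1$, the rings $R[x,\alpha]/\langle x^n\rangle$ and $R[[x,\alpha]]/\langle x^n\rangle$ are naturally isomorphic — both consist of the finite sums $a_0+a_1x+\cdots+a_{n-1}x^{n-1}$ with the twisted multiplication induced by $xr=\alpha(r)x$ — so it suffices to treat (i). (Along the way one checks that $\langle x^n\rangle=Rx^n=x^nR$ is a genuine two-sided ideal, using the identity $x^n r=\alpha^n(r)x^n$, even when $\alpha$ is not surjective.) Write $S:=R[[x,\alpha]]/\langle x^n\rangle$ and let $N:=\langle x\rangle/\langle x^n\rangle\subseteq S$. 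Since $N^n=0$, the ideal $N$ is nilpotent, so $N\subseteq J(S)$, and the map sending a series to its constant term induces an isomorphism $S/N\cong R$.

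For the ``only if'' direction (if $S$ is UUSC then $R$ is UUSC), I would note that $R$ embeds into $S$ as the subring of constant series and that this embedding is unital; Proposition~\ref{prop 2.4} then applies verbatim and gives that $R$ is UUSC. For the converse, suppose $R$ is UUSC. Since every unit $u$ admits the strongly clean decomposition $u=0+u$, it is enough to show that any strongly clean decomposition of a unit of $S$ has zero idempotent part. So take $f\in U(S)$ with $f=e+g$, where $e\in{\rm Id}(S)$, $g\in U(S)$ and $eg=ge$. Reducing modulo $N$ yields $\bar f=\bar e+\bar g$ in $S/N\cong R$ with $\bar e\in{\rm Id}(R)$, $\bar g\in U(R)$ and $\bar e\bar g=\bar g\bar e$; since $\bar f$ is a unit and $R$ is UUSC, uniqueness forces $\bar e=0$, i.e. $e\in N\subseteq J(S)$. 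But then $e$ is an idempotent lying in $J(S)$, so from $e(1-e)=0$ with $1-e\in U(S)$ we get $e=0$, as required. (Alternatively, one may run the coefficient-by-coefficient argument of Proposition~\ref{formal} once $e_0=0$ is known, but the Jacobson-radical observation is quicker.)

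I do not expect a genuine obstacle here; the only points needing a little care are the verification that $\langle x^n\rangle$ is two-sided when $\alpha$ fails to be surjective and the identification of the polynomial and power-series truncations, both of which are routine. The conceptual core — that the truncation ideal is nilpotent, hence inside $J$, so that idempotents in it vanish and units reduce to units modulo it — is exactly the mechanism already used in Proposition~\ref{formal}, and the corollary will follow by combining it with Proposition~\ref{prop 2.4}.
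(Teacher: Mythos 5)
Your argument is correct and complete. Note that the paper offers no proof of this corollary at all: it is stated as an ``interesting consequence'' of Proposition~\ref{formal}, with the intended mechanism presumably being either the coefficient-by-coefficient computation in the proof of that proposition (show $e_0=0$, then deduce $e_i=0$ for all $i$ from $e^2=e$) or, as is done explicitly for the untwisted case in Corollary~\ref{cor 2.14}, an appeal to Proposition~\ref{prop 2.13} with $S=R$ and $I=\langle x\rangle/\langle x^n\rangle$, using ${\rm Id}(I)=\{0\}$. Your route is genuinely different and arguably cleaner: after the (routine but worth recording) verification that $\langle x^n\rangle$ is two-sided even for non-surjective $\alpha$ and that the polynomial and power-series truncations coincide, you observe that the truncation ideal $N$ is nilpotent, hence $N\subseteq J(S)$ with $S/N\cong R$; a strongly clean decomposition of a unit then pushes down to $R$, where UUSC forces the idempotent into $N\subseteq J(S)$, and an idempotent in the Jacobson radical is zero. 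This replaces the inductive coefficient bookkeeping by a one-line radical argument and, unlike the Proposition~\ref{prop 2.13} route, does not need ${\rm Id}(I)=\{0\}$ as a separate hypothesis (it falls out of $N\subseteq J(S)$). The one direction you handle via Proposition~\ref{prop 2.4} (constant series form a unital subring) matches what the paper would do. No gaps.
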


We are now ready to attack the following.

\begin{proposition}\label{prop 2.13}
Suppose that $R= S+I$, where $S$ is a subring of $R$ with $1_{R}\in S$, and $I$ is an ideal of $R$ such that $S\cap I=\{0\}$. If $S$ is UUSC and $(U(R) + U(R))\cap {\rm Id}(I) = \{0\}$, then $R$ is UUSC.
\end{proposition}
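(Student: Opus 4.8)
The plan is to exploit the additive decomposition $R = S \oplus I$ (coming from $R = S+I$ together with $S\cap I = \{0\}$), combined with the observation that the associated projection onto $S$ is actually a ring homomorphism. Concretely, since $I$ is a two--sided ideal of $R$ and $1_R \in S$, the map $\pi : R \to S$ sending $s + a \mapsto s$ (for $s\in S$, $a\in I$) is a well--defined \emph{unital} ring homomorphism with $\ker \pi = I$: for $s,s'\in S$ and $a,a'\in I$ we have $(s+a)(s'+a') = ss' + (sa' + as' + aa')$ with $ss'\in S$ and $sa' + as' + aa'\in I$, so $\pi$ respects multiplication, and clearly $\pi(1_R) = 1_S$. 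In particular $\pi$ sends idempotents to idempotents and, being unital, sends units to units (if $uw = wu = 1_R$ then $\pi(u)\pi(w) = \pi(w)\pi(u) = 1_S$).

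By Proposition~\ref{prop2.1} it is enough to verify the UUSC condition for $R$. So take a unit $u\in U(R)$ with a decomposition $u = e + v$, where $e\in {\rm Id}(R)$, $v\in U(R)$ and $ev = ve$; I must show $e = 0$. Applying $\pi$ gives $\pi(u) = \pi(e) + \pi(v)$ inside $S$, where $\pi(e)\in {\rm Id}(S)$, $\pi(u),\pi(v)\in U(S)$, and $\pi(e)\pi(v) = \pi(ev) = \pi(ve) = \pi(v)\pi(e)$. Since $S$ is UUSC and $\pi(u)$ is a unit of $S$, the uniqueness of its strongly clean decomposition forces $\pi(e) = 0$, that is, $e\in \ker\pi = I$.

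Hence $e$ is an idempotent of $R$ lying in $I$, so $e\in {\rm Id}(I)$; on the other hand $e = u + (-v)$ exhibits $e$ as a sum of the two units $u,-v\in U(R)$, so $e\in (U(R)+U(R))\cap {\rm Id}(I)$. By hypothesis this intersection equals $\{0\}$, whence $e = 0$. Therefore every unit of $R$ is uniquely strongly clean, i.e. $R$ is UUSC, as claimed.

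I do not expect a real obstacle in this argument: the one point that genuinely needs the hypotheses is that $\pi$ is a (unital) ring homomorphism, which relies on $I$ being a two--sided ideal so that all mixed products land in $I$. After that, the first half of the decomposition is killed by the UUSC property of $S$ and the remaining part is killed by the stated intersection condition. One could equivalently phrase the first step via the ring isomorphism $R/I \cong S$ and the canonical quotient map, but the explicit projection $\pi$ seems the cleanest bookkeeping.
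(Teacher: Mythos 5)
Your proof is correct and follows essentially the same route as the paper's: decompose along $R = S \oplus I$, use the UUSC property of $S$ to force the $S$-component of the idempotent to vanish, and then invoke the hypothesis $(U(R)+U(R))\cap {\rm Id}(I)=\{0\}$ to kill what remains. Packaging the decomposition as the unital ring homomorphism $\pi$ with kernel $I$ is a tidier way to obtain $\pi(e)\pi(v)=\pi(v)\pi(e)$ than the paper's component-by-component comparison of $ev=ve$, which it handles by a rather opaque case analysis.
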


\begin{proof}
Write $u=e+v$, where $ev=ve$, $u,v\in U(R)$ and $e\in Id(R)$. Write $u=u_{s}+u_{i}$, $e=e_{s}+e_{i}$, $v=v_{s}+v_{i}$, where $u_{s},e_{s},v_{s}\in S$ and $u_{i},e_{i},v_{i}\in I$. So, employing \cite[Proposition 2.12]{1}, we deduce $u_{s} = e_{s} + v_{s}$, where $u_{s},v_{s}\in U(S)$, $e_{s}\in {\rm Id}(S)$. Also, one may write $$ev= (e_{s}+e_{i}) (v_{s}+v_{i}) = e_{s} v_{s} + e_{s}v_{i}+e_{i}v_{s}+e_{i}v_{i}=$$
$$= v_{s}e_{s} + v_{s}e_{i}+ v_{i}e_{s} + v_{i}e_{i}=(v_{s}+ v_{i}) (e_{s}+e_{i})=ve.$$ If, however, $e_{s} v_{s}=v_{s}e_{i}$ or $v_{i}e_{s}=v_{i}e_{i}$, then one sees that $e_{s}v_{s}=0$, because $S\cap I=\{0\}$. Therefore, $e_{s}=0$, and thus $e=e_{i}\in I$. From the equality $(U(R)+U(R))\cap Id(I)=\{0\}$, it follows automatically that $e=0$. But, if $e_{s}v_{s}=v_{s}e_{s}$, we derive $e_{s}=0$, because $S$ is UUSC. So, by what we have established above, we receive $e=0$. Processing analogously, we will have a similar result for double, triple and quadruple states. Therefore, in all cases, we obtain $e_{s}=0$ and hence $e=0$, so that $R$ is UUSC, as claimed.
\end{proof}

Let $A$ be a ring and $V$ a bi-module over $A$. The trivial extension of $A$ and $V$ is ${\rm T}(A, V) = \{(a, x) | a \in A, x \in V\}$ with the addition defined component-wise and the multiplication defined by $(a, x)(b, y) = (ab, ay + xb)$.

\begin{corollary}\label{cor 2.14}
Let $A, B$ be rings, $V$ a bi-module over $A, M$ an $(A, B)$-bi-module and $n \geq 1$

\begin{enumerate}
\item
$A[[x]]$ is UUSC if, and only if, $A$ is UUSC.

\item
$\dfrac{A[[x]]}{\langle x^n\rangle}$ is UUSC if, and only if, $A$ is UUSC.

\item
$\frac{A[x]}{\langle x^n\rangle}$ is UUSC if, and only if, $A$ is UUSC.

\item
${\rm T}(A, V)$ is UUSC if, and only if, $A$ is UUSC.

\item
The formal triangular matrix ring
$\begin{pmatrix}
A & M\\
0 & B
\end{pmatrix}$
is UUSC if, and only if, both $A, B$ are UUSC.

\item
The upper triangular matrix ring ${\rm T}_n(A)$ is UUSC if, and only if, $A$ is UUSC.
\end{enumerate}
\end{corollary}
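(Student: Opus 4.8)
The plan is to derive all six items from the tools already assembled, so that essentially no fresh computation is needed. Part (i) is just Proposition~\ref{formal} applied with $\alpha = \mathrm{id}_A$, since $A[[x]] = A[[x,\mathrm{id}_A]]$; likewise parts (ii) and (iii) fall out of the corollary immediately following Proposition~\ref{formal} by specializing $\alpha$ to the identity endomorphism.

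For part (iv) I would put ${\rm T}(A,V)$ into the shape demanded by Proposition~\ref{prop 2.13}. Set $S = \{(a,0) \mid a \in A\}$, a subring isomorphic to $A$ and containing $1_{{\rm T}(A,V)} = (1,0)$, and $I = \{(0,x) \mid x \in V\}$, a two-sided ideal with $S \cap I = \{0\}$ and $S + I = {\rm T}(A,V)$. The key point is that $I^2 = 0$ by the very definition of the multiplication in the trivial extension; hence $I$ is a nil ideal, so $I \subseteq J({\rm T}(A,V))$ and ${\rm Id}(I) = \{0\}$. Consequently the side condition $(U({\rm T}(A,V)) + U({\rm T}(A,V))) \cap {\rm Id}(I) = \{0\}$ of Proposition~\ref{prop 2.13} holds vacuously, and that proposition gives: if $A \cong S$ is UUSC, then ${\rm T}(A,V)$ is UUSC. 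The converse is Proposition~\ref{prop 2.4}, because $S \cong A$ is a subring of ${\rm T}(A,V)$ sharing its identity.

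Part (v) goes the same way. Write $R = \begin{pmatrix} A & M \\ 0 & B \end{pmatrix}$, $S = \begin{pmatrix} A & 0 \\ 0 & B \end{pmatrix} \cong A \times B$ (again containing $1_R$), and $I = \begin{pmatrix} 0 & M \\ 0 & 0 \end{pmatrix}$, an ideal with $I^2 = 0$, $S \cap I = \{0\}$ and $S + I = R$. As before $I \subseteq J(R)$ and ${\rm Id}(I) = \{0\}$, so Proposition~\ref{prop 2.13} yields that $R$ is UUSC once $S \cong A \times B$ is, and by Proposition~\ref{prop 2.5} the product $A \times B$ is UUSC exactly when both $A$ and $B$ are. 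For the reverse implication, $S \cong A \times B$ is a subring of $R$ with $1_S = 1_R$, so Proposition~\ref{prop 2.4} forces $A \times B$, hence each factor, to be UUSC. Finally, part (vi) follows from (v) by induction on $n$: for $n \geq 2$ one has ${\rm T}_n(A) \cong \begin{pmatrix} A & A^{n-1} \\ 0 & {\rm T}_{n-1}(A) \end{pmatrix}$, so (v) together with the inductive hypothesis ``${\rm T}_{n-1}(A)$ is UUSC iff $A$ is UUSC'' closes the loop; the case $n = 1$ is trivial.

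The only step requiring any real care — and the one on which all of (iv)--(vi) genuinely hinge — is checking that in the trivial extension and the formal triangular matrix ring the complement ideal $I$ squares to zero, which simultaneously places it inside $J(R)$ and kills every nonzero idempotent in it, thereby making the hypothesis $(U(R)+U(R)) \cap {\rm Id}(I) = \{0\}$ of Proposition~\ref{prop 2.13} automatic. Everything else is routine identification of $S$ with $A$ (respectively $A \times B$) together with appeals to Propositions~\ref{prop 2.4} and \ref{prop 2.5}.
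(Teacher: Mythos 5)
Your proof is correct and follows essentially the same route as the paper: parts (iv)--(vi) use exactly the paper's decomposition $R=S+I$ with ${\rm Id}(I)=\{0\}$ fed into Proposition~\ref{prop 2.13}, with the converse coming from Proposition~\ref{prop 2.4} and the product $A\times B$ handled by Proposition~\ref{prop 2.5}. The only (harmless) variations are that you obtain (i)--(iii) by specializing Proposition~\ref{formal} and its corollary to $\alpha=\mathrm{id}$, whereas the paper applies Proposition~\ref{prop 2.13} with $I=xR$, and that you invoke Proposition~\ref{prop 2.4} abstractly for the necessity in (iv)--(v) where the paper writes out the explicit diagonal computation.
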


\begin{proof}
\begin{enumerate}
\item
Let $R = A[[x]], S = A$ and $I = xR$. Since ${\rm Id} (I) = \{0\}$, point (i) follows from Propositions \ref{prop 2.13} and \ref{prop 2.4}.

\item
Let $R = \frac{A[[x]]}{\langle x^n\rangle}$, $S = A$ and $I = xR$. Since ${\rm Id}(I) = \{0\}$, point (ii) follows from Propositions \ref{prop 2.13} and \ref{prop 2.4}.

\item
As $\frac{A[[x]]}{\langle x^n\rangle} \cong \frac{A[x]}{\langle x^n \rangle}$, point (iii) follows automatically from (ii).

\item
For the sufficiency, let $R = {\rm T}(A, V)$, $S = {\rm T}(A, 0)$ and $I = {\rm T}(0, V)$. Since ${\rm Id} (I) = \{0\}$, $R$ is UUSC utilizing Proposition \ref{prop 2.13}.

For the necessity, let $R$ be UUSC and $u = e + v$, where $u, v \in U (A)$ and $e \in {\rm Id} (A)$ with $ev = ve$. Then, we have $(u, 0) = (e, 0) + (v, 0)$, where $(u, 0)(v, 0) \in U (R)$ and $(e, 0) \in {\rm Id} (R)$ with $(e, 0)(v, 0) = (v, 0)(e, 0)$. So, $(e, 0) = (0,0)$ and hence $e = 0$. Thus, $A$ is UUSC.

\item
For the sufficiency, let $R =
\begin{pmatrix}
A & M\\
0 & B
\end{pmatrix}$,
$S=
\begin{pmatrix}
A & 0\\
0 & B
\end{pmatrix}$
and $I  =
\begin{pmatrix}
0 & M\\
0 & 0
\end{pmatrix}$.
Since ${\rm Id} (I) = \{0\}$, $R$ is UUSC consulting with Proposition \ref{prop 2.13}.

For the necessity, let $R$ be UUSC. Write $u = e + v$ and $u^\prime = e^\prime + v^\prime$, where $u, v \in U (A), u^\prime, v^\prime \in U (B), e \in {\rm Id} (A)$ and $e^\prime \in {\rm Id} (B)$ with $ev = ve$ and $e^\prime v^\prime = v^\prime e^\prime$. Therefore, one infers that
$$\begin{pmatrix}
u & 0\\
0 & u^\prime
\end{pmatrix}
=
\begin{pmatrix}
e & 0\\
0 & e^\prime
\end{pmatrix}
+
\begin{pmatrix}
v & 0\\
0 & v^\prime
\end{pmatrix},$$
\noindent where
$\begin{pmatrix}
u & 0\\
0 & u^\prime
\end{pmatrix},
\begin{pmatrix}
v & 0\\
0 & v^\prime
\end{pmatrix}
\in U (R)$ and
$\begin{pmatrix}
e & 0\\
0 & e^\prime
\end{pmatrix}
\in {\rm Id}(R)$ with
$$\begin{pmatrix}
e & 0\\
0 & e^\prime
\end{pmatrix}
\begin{pmatrix}
v & 0\\
0 & v^\prime
\end{pmatrix}
=
\begin{pmatrix}
v & 0\\
0 & v^\prime
\end{pmatrix}
\begin{pmatrix}
e & 0\\
0 & e^\prime
\end{pmatrix}.$$
\noindent So, by hypothesis,
$$\begin{pmatrix}
e & 0\\
0 & e^\prime
\end{pmatrix}
=
\begin{pmatrix}
0 & 0\\
0 & 0
\end{pmatrix}.$$
Consequently, $e = e^\prime = 0$ whence $A, B$ are both UUSC.

\item
It follows from (v).
\end{enumerate}
\end{proof}

As a consequence, we derive:

\begin{corollary}
Let $R$ be a ring and let $T$ be a subring of $R[[x]]$ with $R \subseteq T \subseteq R[[x]]$. Then, $R$ is UUSC if, and only if, so is $T$.
\end{corollary}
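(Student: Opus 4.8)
The plan is to sandwich $T$ between $R$ and $R[[x]]$ and invoke the two extreme cases, which we already control. The right-hand implication is the subring direction: if $T$ is UUSC, then since $R$ is a subring of $T$ containing $1$, Proposition~\ref{prop 2.4} immediately yields that $R$ is UUSC.

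For the converse, assume $R$ is UUSC. By Corollary~\ref{cor 2.14}(i), the full power series ring $R[[x]]$ is UUSC. Now $T$ is a subring of $R[[x]]$ (and contains $1_{R}=1_{R[[x]]}$, since $R\subseteq T$), so a second application of Proposition~\ref{prop 2.4}, this time to the inclusion $T\subseteq R[[x]]$, gives that $T$ is UUSC. This closes the loop.

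The only point that needs a line of care is that $T$ really is a \emph{unital} subring of $R[[x]]$ with the shared identity $1_{R[[x]]}$; this is built into the hypothesis $R\subseteq T\subseteq R[[x]]$, since $1_{R[[x]]}=1_{R}\in R\subseteq T$, so the hypotheses of Proposition~\ref{prop 2.4} apply verbatim in both invocations. I do not expect any genuine obstacle here: the statement is a formal corollary obtained by applying the subring stability result (Proposition~\ref{prop 2.4}) at both ends of the chain $R\subseteq T\subseteq R[[x]]$, together with the already-established fact that passing from $R$ to $R[[x]]$ preserves UUSC (Corollary~\ref{cor 2.14}(i)).
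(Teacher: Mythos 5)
Your proof is correct and matches the paper's argument exactly: the paper also deduces this corollary by combining Proposition~\ref{prop 2.4} (subring stability, applied to $R\subseteq T$ and to $T\subseteq R[[x]]$) with Corollary~\ref{cor 2.14}(i) (that $R[[x]]$ is UUSC when $R$ is). Your remark about the shared identity is fine but not even needed, since Proposition~\ref{prop 2.4} as proved in the paper already accommodates subrings whose identity may differ from that of the ambient ring.
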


\begin{proof}
It follows from a combination of Proposition \ref{prop 2.4} and Corollary \ref{cor 2.14}.
\end{proof}

A Morita context is a $4-tuple
\begin{pmatrix}
A & M\\
N & B
\end{pmatrix}$,
where $A$ and $B$ are rings, ${}_{A}M_{B}$ and ${}_{B}N_{A}$ are bi-modules and there exist two context products $M \times N \rightarrow A$ and $N \times M \rightarrow B$, written multiplicatively as $(x, y) = xy$ and $(y, x) = yx$, such that
$\begin{pmatrix}
A & M\\
N & B
\end{pmatrix}$
is an associative ring with the obvious matrix operations. A Morita context
$\begin{pmatrix}
A & M\\
N & B
\end{pmatrix}$
is called trivial, provided the context products are trivially, i.e., $MN = 0$ and $NM = 0$. We know
$$\begin{pmatrix}
A & M\\
N & B
\end{pmatrix}
\cong
{\rm T}(A \times B, M \oplus N),$$
where
$\begin{pmatrix}
A & M\\
N & B
\end{pmatrix}$
is a trivial Morita context.

We, thereby, extract the following.

\begin{corollary}
The trivial Morita context
$\begin{pmatrix}
A & M\\
N & B
\end{pmatrix}$
is UUSC if, and only if, $A, B$ are both UUSC.
\end{corollary}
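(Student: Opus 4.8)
The plan is to reduce the statement about the trivial Morita context to the already-established trivial-extension result. I would begin by invoking the isomorphism
$$\begin{pmatrix} A & M\\ N & B \end{pmatrix} \cong {\rm T}(A \times B, M \oplus N),$$
which holds precisely because the context products vanish ($MN = 0$ and $NM = 0$), so that $M \oplus N$ is an $(A\times B)$-bimodule with the obvious actions and no interaction between the two summands. This isomorphism is recorded immediately before the statement, so I may quote it freely.

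Next I would apply Corollary \ref{cor 2.14}(iv) to the ring $A \times B$ and the bimodule $V = M \oplus N$: it says ${\rm T}(A\times B, M\oplus N)$ is UUSC if, and only if, $A \times B$ is UUSC. Combining this with the previous isomorphism, the trivial Morita context is UUSC if, and only if, $A \times B$ is UUSC. Finally, Proposition \ref{prop 2.5} — the direct-product characterization — gives that $A\times B$ is UUSC if, and only if, both $A$ and $B$ are UUSC. Chaining these three equivalences yields exactly the claim.

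Since every ingredient is a previously proved equivalence, there is essentially no obstacle here; the proof is a one-line composition of citations. The only point that warrants a brief remark is the verification that the bimodule structure matching up under the isomorphism is the ``obvious'' one, i.e.\ that the multiplication in ${\rm T}(A\times B, M\oplus N)$ — namely $(a,x)(b,y) = (ab, ay+xb)$ — reproduces the matrix multiplication in the trivial Morita context once the cross terms $MN$ and $NM$ are killed. This is routine and already asserted in the text preceding the corollary, so I would simply cite it rather than expand it.

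\begin{proof}
By the displayed isomorphism preceding the statement, the trivial Morita context $\begin{pmatrix} A & M\\ N & B \end{pmatrix}$ is isomorphic to the trivial extension ${\rm T}(A\times B, M\oplus N)$. By Corollary \ref{cor 2.14}(iv), this trivial extension is UUSC if, and only if, $A\times B$ is UUSC. Finally, by Proposition \ref{prop 2.5}, the ring $A\times B$ is UUSC if, and only if, both $A$ and $B$ are UUSC. Combining these equivalences gives the result.
\end{proof}
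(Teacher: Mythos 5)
Your proof is correct and follows essentially the same route as the paper: both pass through the isomorphism with ${\rm T}(A\times B, M\oplus N)$ and then combine Corollary \ref{cor 2.14} with Proposition \ref{prop 2.5}. The only cosmetic difference is that the paper adds a further identification with the formal triangular matrix ring $\begin{pmatrix} A\times B & M\oplus N\\ 0 & A\times B\end{pmatrix}$ (suggesting part (v) of Corollary \ref{cor 2.14}), whereas you apply part (iv) directly to the trivial extension, which is if anything slightly more economical.
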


\begin{proof}
It is easy to see that the next relations are valid:
$$\begin{pmatrix}
A & M\\
N & B
\end{pmatrix}
\cong {\rm T}(A \times B, M \oplus N) \cong
\begin{pmatrix}
A \times B & M \oplus N\\
0 & A \times B
\end{pmatrix}.$$
Thus, the rest of the proof follows applying Corollary \ref{cor 2.14} and Proposition \ref{prop 2.5}.
\end{proof}

\begin{proposition}
If ${\rm T}(A, V)$ is CUSC, then $A$ is CUSC. The converse holds, provided that $ex = xe$ for all $x \in V$ and $e \in {\rm Id}(A)$.
\end{proposition}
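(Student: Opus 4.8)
The plan is to treat the two directions separately, using the structure of idempotents and units in a trivial extension. Recall that in $\mathrm{T}(A,V)$ an element $(a,x)$ is an idempotent precisely when $a^2=a$ and $ax+xa=x$, and it is a unit precisely when $a\in U(A)$ (with inverse $(a^{-1},-a^{-1}xa^{-1})$). For the forward implication, suppose $\mathrm{T}(A,V)$ is CUSC and take a clean element $a\in A$ with $a-f\in U(A)$, $f\in\mathrm{Id}(A)$. Then $(a,0)=(f,0)+(a-f,0)$ exhibits $(a,0)$ as a clean element of $\mathrm{T}(A,V)$, and moreover $(f,0)(a,0)=(fa,0)=(af,0)=(a,0)(f,0)$ since $f$ being the clean-complementary idempotent of $a$ already commutes with $a$ in $A$; actually it is enough to note $(a,0)$ is clean in $\mathrm{T}(A,V)$, hence uniquely strongly clean there. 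I would then show that any strongly clean decomposition $a=e+v$ in $A$ lifts to a strongly clean decomposition $(a,0)=(e,0)+(v,0)$ of $(a,0)$ in $\mathrm{T}(A,V)$, so uniqueness upstairs forces uniqueness of $e$ downstairs; similarly existence of such a decomposition in $\mathrm{T}(A,V)$ projects onto one in $A$. Thus every clean element of $A$ is uniquely strongly clean, i.e.\ $A$ is CUSC.

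For the converse, assume $A$ is CUSC and that $ex=xe$ for all $x\in V$, $e\in\mathrm{Id}(A)$. By Proposition~\ref{prop2.1} it suffices to verify the condition $(U(R)+U(R))\cap\mathrm{Id}(R)=\{0\}$ — but one must first check $\mathrm{T}(A,V)$ is abelian so that proposition applies. The commuting hypothesis $ex=xe$ does exactly this: if $(e,x)$ is an idempotent of $\mathrm{T}(A,V)$ then $e\in\mathrm{Id}(A)$ and $x=ex+xe=2ex$; for any $(b,y)$ one computes $(e,x)(b,y)=(eb,ey+xb)$ and $(b,y)(e,x)=(be,be\cdot\text{--}\,)$, and using $e$ central in $A$ (since $A$ is CUSC, hence abelian by the forward direction together with the fact that CUSC rings are abelian — this follows as CUC rings are abelian and CUSC generalizes appropriately, or directly) plus $ex=xe$, these coincide. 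So $\mathrm{T}(A,V)$ is abelian. Now take $(e,x)\in\mathrm{Id}(\mathrm{T}(A,V))\cap\big(U(\mathrm{T}(A,V))+U(\mathrm{T}(A,V))\big)$, say $(e,x)=(u_1,y_1)+(u_2,y_2)$ with $u_1,u_2\in U(A)$. Projecting to $A$ gives $e=u_1+u_2\in\mathrm{Id}(A)\cap(U(A)+U(A))$, and since $A$ is CUSC, Proposition~\ref{prop2.1} forces $e=0$. It remains to push $e=0$ through to $x=0$: from $e=0$ and $x=ex+xe$ we immediately get $x=0$, so $(e,x)=(0,0)$, and Proposition~\ref{prop2.1} yields that $\mathrm{T}(A,V)$ is CUSC.

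The step I expect to be the main obstacle is the abelian-ness of $\mathrm{T}(A,V)$ in the converse — more precisely, cleanly extracting that $A$ (hence its relevant idempotents) behaves well enough under the hypothesis $ex=xe$ to conclude every idempotent of the trivial extension is central. One has to be careful that $ex=xe$ is stated only for $x\in V$ and $e\in\mathrm{Id}(A)$, and combine it with the idempotent relation $x=2ex$ (equivalently $x=ex+xe$) and with centrality of idempotents of $A$; if $A$ were not already known to be abelian one would need the hypothesis to also cover the $x_1b=bx_1$ type cross terms, but since CUSC rings are abelian this is automatic. The remaining computations — the description of units and idempotents of $\mathrm{T}(A,V)$, and the projection arguments — are routine and parallel the trivial-extension arguments already used for UUSC rings in Corollary~\ref{cor 2.14}(iv), so I would only sketch them and refer back.
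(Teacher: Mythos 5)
Your forward direction is fine and matches the paper's (which simply declares it ``easy to see''): a clean $a\in A$ yields a clean $(a,0)\in{\rm T}(A,V)$, and two strongly clean decompositions of $a$ in $A$ lift to two of $(a,0)$, so uniqueness upstairs gives existence and uniqueness downstairs. The converse, however, has a genuine gap: your entire argument is routed through Proposition~\ref{prop2.1}, which applies only to \emph{abelian} rings, and you justify the needed abelian-ness by asserting that ``CUSC rings are abelian.'' That assertion is false, and the paper itself supplies the counterexample: ${\rm T}_2(\mathbb{Z}_2)$ is CUSC but not abelian (Example~\ref{exam1.4}(iv)); indeed CUSC strictly generalizes CUC precisely because it does not force abelian-ness. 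Without $A$ abelian you can neither conclude that ${\rm T}(A,V)$ is abelian (so the final appeal to Proposition~\ref{prop2.1} is unavailable), nor deduce $e=0$ from $e=u_1+u_2$ in $A$: the two decompositions $u_1=e+(-u_2)=0+u_1$ are both clean, but $e(-u_2)=(-u_2)e$ need not hold, so the CUSC hypothesis on $A$ says nothing about them.

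The paper's proof avoids all of this. It first proves, from the hypothesis $ex=xe$ alone, that \emph{every} idempotent $(e,x)$ of ${\rm T}(A,V)$ has $x=0$: from $ex+xe=x$ one gets $2ex=x$, and multiplying by $e$ gives $2ex=ex$, hence $ex=0$ and $x=0$. (Note that you only derived $x=2ex$ and then treated the case $e=0$; you never establish the vanishing of the second coordinate for a general idempotent, which is the actual key fact here.) With that in hand, any two strongly clean decompositions of a clean element of ${\rm T}(A,V)$ take the form $(e,0)+(u,x)=(f,0)+(v,x')$, and these project to two strongly clean decompositions $e+u=f+v$ of a clean element of $A$; since $A$ is CUSC this forces $e=f$, hence $(e,0)=(f,0)$. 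No abelian-ness of $A$ or of ${\rm T}(A,V)$ is needed anywhere. You should rewrite the converse along these lines.
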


\begin{proof}
Let $T(A, V)$ be CUSC. It is easy to see that $A$ remains CUSC too.

For the converse, suppose A is CUSC.\\

\medskip

{\it Claim}. If $(e, x)^2 = (e, x) \in {\rm T}(A, V)$, then $e^2 = e$ and $x = 0$.\\

\medskip

In fact, $(e, x)^2 = (e, x)$ gives $e^2 = e$ and $ex + xe = x$. So, by hypothesis, we have $2ex = x$. Then, multiplying by $e$ gives $2ex = ex$ and hence $ex = 0$, so that $x = 0$, proving the claim.\\

Let us write $$(e, 0) + (u, x) = (f, 0) + (v, x^\prime),$$ where $(u, x), (v, x^\prime) \in U (T(A, V))$ and $(e, 0), (f,0) \in {\rm Id} (T(A, V))$ with $(e, 0)(u, x) = (u, x)(e, 0)$ and $(f, 0)(v, x^\prime) = (v, x^\prime)(f, 0)$. Therefore, we write $e + u = f + v$, where $u, v \in U (A)$ and $e, f \in {\rm Id}(A)$ with $eu = ue$ and $fv = vf$. Thus, by hypothesis, $e = f$ forcing that $(e, 0) = (f, 0)$ and hence ${\rm T}(A, V)$ is CUSC, as claimed.
\end{proof}

Let $R$ be a ring and let $M$ be an $(R,R)$-bi-module which is a general ring (possibly with no unity) in which the equalities $(mn)r=m(nr)=(mr)n$ and $(rm)n=r(mn)$ hold for all $m,n\in M$ and $r\in R$. Then, the ideal-extension ${\rm I}(R,M)$ of $R$ by $M$ is defined to be the {\it additive abelian group} ${\rm I}(R,M)=R\oplus M$ with multiplication $(r,m)(s,n)=(rs,rn+ms+mn)$. Note that, if $R^{\prime}$ is a ring and $R^{\prime}=R\oplus K$, where $R$ is a subring and $K$ is an ideal of $R^{\prime}$, then $R^{\prime}\cong {\rm I}(R,K)$ holds.

\begin{proposition}\label{prop 2.18}
If ${\rm I}(R,M)$ is UUSC, then $R$ is UUSC.
\end{proposition}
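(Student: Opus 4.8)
The plan is to exhibit $R$ as a unital subring of ${\rm I}(R,M)$ and then quote Proposition~\ref{prop 2.4}. First I would check that $\iota\colon R\to{\rm I}(R,M)$, $\iota(r)=(r,0)$, is an injective ring homomorphism sending $1_R$ to the identity $(1_R,0)$ of ${\rm I}(R,M)$; the only computation is $\iota(r)\iota(s)=(rs,\,r\cdot 0+0\cdot s+0\cdot 0)=(rs,0)=\iota(rs)$, the module component vanishing because $r\cdot 0=0\cdot s=0\cdot 0=0$ in $M$. Then $\iota(R)$ is a subring of ${\rm I}(R,M)$ containing its identity, so if ${\rm I}(R,M)$ is UUSC, then $R\cong\iota(R)$ is UUSC by Proposition~\ref{prop 2.4}, which is exactly the assertion.

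A self-contained variant, not using Proposition~\ref{prop 2.4}, runs as follows. Take any unit $u\in R$ and a strongly clean decomposition $u=e+v$ with $u,v\in U(R)$, $e\in{\rm Id}(R)$ and $ev=ve$. Pushing it into ${\rm I}(R,M)$ gives $(u,0)=(e,0)+(v,0)$, where $(u,0),(v,0)\in U({\rm I}(R,M))$ (with inverses $(u^{-1},0)$ and $(v^{-1},0)$), $(e,0)\in{\rm Id}({\rm I}(R,M))$, and $(e,0)(v,0)=(ev,0)=(ve,0)=(v,0)(e,0)$. As ${\rm I}(R,M)$ is UUSC, the idempotent in this decomposition of $(u,0)$ must be $0$, so $(e,0)=(0,0)$ and $e=0$; since $u$ was arbitrary, $R$ is UUSC.

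I expect no genuine obstacle: this is the ``easy'' direction, matching the necessity parts of Corollary~\ref{cor 2.14}. The only points needing care are that $\{(r,0):r\in R\}$ is truly closed under the twisted multiplication of ${\rm I}(R,M)$ (equivalently, that a product of elements with zero module part again has zero module part) and that the identity invoked in the subring lemma is $(1_R,0)$ rather than some $(1_R,m)$. The converse direction --- UUSC of $R$ implying UUSC of ${\rm I}(R,M)$ --- would presumably require an extra commutativity hypothesis on the action of ${\rm Id}(R)$ on $M$, in the spirit of the CUSC statement for ${\rm T}(A,V)$, but that is not claimed here.
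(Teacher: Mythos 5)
Your proof is correct. The paper actually states Proposition~\ref{prop 2.18} without proof, and both of your arguments---embedding $R$ as the unital subring $\{(r,0)\}$ and invoking Proposition~\ref{prop 2.4}, or pushing the decomposition $u=e+v$ to $(u,0)=(e,0)+(v,0)$---are exactly the routine argument the authors use for the analogous necessity claims (e.g.\ in Corollary~\ref{cor 2.14}(iv)), so there is nothing to compare beyond noting that your write-up supplies the omitted details correctly.
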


\begin{proposition}\label{prop 2.19}
An ideal-extension ${\rm I}(R,M)$ is UUSC if the following conditions are satisfied:
\begin{enumerate}
\item[(a)]
$R$ is UUSC.
\item[(b)]
If $e\in {\rm Id}(R)$, then $em=me$ for all $m\in M$.
\item[(c)]
If $m\in M$, then $m+n+mn=0$
for some $n\in M$.
\end{enumerate}
\end{proposition}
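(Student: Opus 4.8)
My plan is to unwind the definition of UUSC for $R':={\rm I}(R,M)$ directly, leaning on the obvious surjective ring homomorphism $\pi\colon R'\to R$, $\pi(r,m)=r$, whose kernel is the ideal $\{(0,m):m\in M\}$ and which sends the identity $(1,0)$ of $R'$ to $1$. Since I only have to show that every unit of $R'$ is uniquely strongly clean, I would fix a unit $(u,m)\in U(R')$; because $\pi$ is a ring homomorphism, $u\in U(R)$, and hence the idempotent $(0,0)$ already satisfies $(u,m)-(0,0)\in U(R')$ and commutes with $(u,m)$. So the whole task reduces to proving: if $(e,x)\in{\rm Id}(R')$ with $(u,m)-(e,x)\in U(R')$ and $(e,x)(u,m)=(u,m)(e,x)$, then $(e,x)=(0,0)$.

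The first half of this would be pure transport along $\pi$. Applying $\pi$ to the three relations above gives $e\in{\rm Id}(R)$, $u-e\in U(R)$, and (reading off the first coordinate of the commutation identity) $eu=ue$. Thus $u$ has the two strongly clean decompositions $u=0+u=e+(u-e)$ in $R$, and since $R$ is UUSC by hypothesis (a), the idempotent occurring in a strongly clean decomposition of a unit is unique; therefore $e=0$. Now $(e,x)=(0,x)$, and expanding $(0,x)^2=(0,x)$ collapses to $x^2=x$ inside the general ring $M$. Condition (b) is what, in general, reduces the idempotent equation $(e,x)^2=(e,x)$ to the pair $e^2=e$, $2ex+x^2=x$; with $e=0$ it is consistent with $x^2=x$ and nothing further is demanded of it.

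The one genuinely non-formal step, and the place where I expect the real work to sit, is squeezing $x=0$ out of the quasi-regularity condition (c). The right move is to apply (c) not to $x$ but to $-x$: this yields $n\in M$ with $(-x)+n+(-x)n=0$, i.e.\ $n-xn=x$. Left-multiplying by $x$ and using associativity in $M$ together with $x^2=x$ gives $xn-x(xn)=x^2$, that is $xn-x^2n=x$, hence $xn-xn=x$, so $x=0$. Consequently $(e,x)=(0,0)$, every unit of ${\rm I}(R,M)$ is uniquely strongly clean, and ${\rm I}(R,M)$ is UUSC. Everything outside this last paragraph is bookkeeping through $\pi$ plus the UUSC property of $R$; the only thing one has to spot is the substitution $m=-x$ in (c) followed by a single left multiplication by $x$.
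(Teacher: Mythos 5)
Your proof is correct, and its skeleton coincides with the paper's: project onto the first coordinate, observe that $u=0+u=e+(u-e)$ are two strongly clean decompositions of the unit $u$ in $R$, and invoke (a) to force $e=0$. Where you differ is in how the $M$-component of the idempotent is disposed of. The paper quotes Nicholson--Zhou's Proposition 7 to assert that under (b) and (c) \emph{every} idempotent of ${\rm I}(R,M)$ has the form $(e,0)$, so its second coordinate is dead on arrival; you instead establish $e=0$ first and then kill $x$ by hand, applying (c) to $-x$ and left-multiplying by $x$ --- the classical argument that a general ring all of whose elements are quasi-regular has no nonzero idempotents. Your route is self-contained where the paper's leans on a citation, and it exposes the fact that hypothesis (b) is superfluous for the UUSC statement as written: (b) only matters when one must control a general idempotent $(e,x)$ with $e\neq 0$, as in the converse description of the units and idempotents of ${\rm I}(R,M)$ or in the CUSC analogue mentioned in the paper's subsequent Remark. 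The only assumption your final computation uses beyond (a) and (c) is associativity of the multiplication on $M$, which is part of the standing hypothesis that $M$ is a general ring.
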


\begin{proof}
In view of points (b) and (c), every unit of ${\rm I}(R,M)$ is of the form $(u,m)$, where $u\in U(R)$ and $m\in M$ as well as every idempotent of ${\rm I}(R,M)$ is of the form $(e,0)$, where $e\in {\rm Id}(R)$ (see, for instance, \cite[Proposition 7]{3}). Now, let $(u, m) = (e,0) +(v, m^{\prime})$, where $(u,m), (v,m^{\prime})$ are units of ${\rm I}(R,M)$, and $(e,0)$ is an idempotent of ${\rm I}(R,M)$. Moreover, $(e,0)( v,m^{\prime})=(v,m^{\prime})(e,0)$, so $u=e+v$ such that $ev=ve$. Therefore, in virtue of (a), we have $e=0$, and thus $(e,0)=(0,0)$, as pursued.
\end{proof}

\begin{remark}
It is easy to see that Propositions \ref{prop 2.18} and \ref{prop 2.19} hold also for CUSC rings.
\end{remark}

\section{Main Theorems}

A ring $R$ is called {\it semi-potent} if every one-sided ideal {\it not} contained in $J(R)$ contains a non-zero idempotent. Moreover, a semi-potent ring $R$ is called {\it potent} if idempotents lift modulo $J(R)$. Hereafter, the center of a ring $R$ is denoted by ${\rm Z}(R)$.

\begin{theorem}\label{theo3.1}
Let $R$ be a semi-potent ring. Then, the following are equivalent:

\begin{enumerate}
\item
$\dfrac{R}{J(R)}$ is UUSC.
\item
$\dfrac{R}{J(R)}$ is Boolean.
\item
$U(R)=1+J(R)$.
\item
$U(R) \subseteq u c n_0(R)=\left\{e+j | e^2=e \in {\rm Z}(R), j \in J(R)\right\}$.
\item
For each $a \in U(R)$, there exists a unique $e \in {\rm Id}(R)$ such that $a-e \in J(R)$.
\item
For each $a \in U(R)$, there exists $e \in {\rm Id}(R)$ such that $a-e\in J(R)$.
\end{enumerate}
\end{theorem}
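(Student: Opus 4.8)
The idea is to run a single cycle through all six statements, noticing that most implications are elementary and that semi-potency is really needed in only two places. Throughout put $\bar R:=R/J(R)$; note at the outset that $\bar R$ is again semi-potent (a nonzero right ideal of $\bar R$ pulls back to a right ideal of $R$ not contained in $J(R)$, hence containing a nonzero idempotent, whose image is a nonzero idempotent of $\bar R$) and that $J(\bar R)=0$.

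\textbf{Elementary steps (no semi-potency).} The workhorse observation is: if $a\in U(R)$ and $e\in{\rm Id}(R)$ satisfy $a-e\in J(R)$, then $\bar e=\bar a$ is an idempotent that is a unit in $\bar R$, so $\bar e=\bar 1$; hence $1-e$ is an idempotent lying in $J(R)$, forcing $1-e=0$, i.e. $e=1$ and $a-1\in J(R)$. This at once gives (vi)$\Rightarrow$(v) (uniqueness, the only witness being $e=1$), while (v)$\Rightarrow$(vi) is trivial, and (vi)$\Rightarrow$(iii) because then $U(R)\subseteq 1+J(R)$, the reverse inclusion being automatic. Also (iii)$\Rightarrow$(iv) is clear ($1$ is a central idempotent), and (iv)$\Rightarrow$(iii) follows from the same device applied to a unit $a=e+j$ with $e\in{\rm Z}(R)\cap{\rm Id}(R)$ and $j\in J(R)$: then $\bar a=\bar e$ is an invertible idempotent, so $\bar e=\bar 1$ and $a-1=(e-1)+j\in J(R)$. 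Finally (ii)$\Rightarrow$(iii) since a Boolean ring has only the unit $1$, so $\bar a=\bar 1$ for every $a\in U(R)$; and (ii)$\Rightarrow$(i) since in a Boolean ring the unique unit $1$ has the unique strongly clean decomposition $1=0+1$. So far (ii)$\Leftrightarrow$(iii)$\Leftrightarrow$(iv)$\Leftrightarrow$(v)$\Leftrightarrow$(vi) and (ii)$\Rightarrow$(i); it remains to supply (iii)$\Rightarrow$(ii) and (i)$\Rightarrow$(iii), and here semi-potency is used.

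\textbf{The implication (iii)$\Rightarrow$(ii).} From $U(R)=1+J(R)$ one first gets $U(\bar R)=\{\bar 1\}$: given $\bar u\in U(\bar R)$, lift $\bar u$ and $\bar u^{-1}$ to $u,w\in R$; then $uw,wu\in 1+J(R)\subseteq U(R)$, so $u$ is both left and right invertible in $R$, hence $u\in U(R)=1+J(R)$ and $\bar u=\bar 1$. So it suffices to show that a semi-potent ring $S$ with $J(S)=0$ and $U(S)=\{1\}$ is Boolean. Here $U(S)=\{1\}$ forces $1+x\notin U(S)$ for all $x\neq 0$, so $S$ has no nonzero square-zero element, hence no nonzero nilpotent, hence is reduced; thus $S$ is abelian, and it is Dedekind-finite (if $ab=1$ then $1-ba$ is a central idempotent $e$ with $ae=0$, whence $e=e(ab)=(ea)b=0$). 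Now for $0\neq a\in S$, semi-potency gives a nonzero (central) idempotent $e=ax\in aS$; in the corner ring $eS$, whose unit group is $\{e\}$ (as $S=eS\times(1-e)S$ and $|U(S)|=1$) and which is Dedekind-finite, the element $ea$ has a right inverse, hence is a unit, hence $ea=e$. Applying this to $a^2-a$ in place of $a$: if $a^2\neq a$ we get a nonzero central idempotent $e$ with $e(a^2-a)=e$, and then $\beta:=ea\in eS$ satisfies $\beta^2-\beta=e$, i.e. $\beta(\beta-e)=1_{eS}$, so $\beta\in U(eS)=\{e\}$ and $e=\beta^2-\beta=0$, a contradiction. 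Hence $a^2=a$ for all $a$, i.e. $\bar R$ is Boolean.

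\textbf{The implication (i)$\Rightarrow$(iii), and the main obstacle.} Here $\bar R$ is UUSC, semi-potent, with $J(\bar R)=0$, and we again want $U(\bar R)=\{\bar 1\}$. One cheap fact from UUSC is that no unit $u\neq 1$ can have $u-1$ a unit (else $u=1+(u-1)=0+u$ are two strongly clean decompositions of $u$). The difficult point — which I regard as the crux of the theorem — is to upgrade this to $U(\bar R)=\{\bar 1\}$; the natural route is to first establish that a semi-potent UUSC ring with zero Jacobson radical is abelian, for then the idempotents produced by semi-potency are central and the corner-ring argument of the previous paragraph applies: given a unit $u\neq 1$, take a nonzero central idempotent $e\in(u-1)\bar R$ and check that $eu$ and $eu-e$ are both units of $e\bar R$, contradicting the UUSC property of $e\bar R$ unless $e(u-1)=0$ — impossible since $e(u-1)\in U(e\bar R)$. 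The delicate part is precisely this abelian-ness: ruling out, by square-zero manipulations together with semi-potency, that a non-central idempotent coexists with the UUSC hypothesis in a ring with $J=0$ (alternatively one may appeal to the analysis of units in strongly clean semi-potent rings of \cite{2}). Once abelian-ness is secured, everything collapses and the cycle (i)$\Rightarrow$(iii)$\Leftrightarrow$(iv)$\Leftrightarrow$(v)$\Leftrightarrow$(vi)$\Leftrightarrow$(ii)$\Rightarrow$(i) closes.
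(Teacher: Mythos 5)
Your elementary chain (ii)$\Leftrightarrow$(iii)$\Leftrightarrow$(iv)$\Leftrightarrow$(v)$\Leftrightarrow$(vi) and (ii)$\Rightarrow$(i) is correct, and your direct proof of (iii)$\Rightarrow$(ii) (reducedness from $U=\{1\}$, then the corner argument applied to $a^2-a$) is a nice self-contained replacement for the paper's citation of Calugareanu--Zhou. But the proof as written has a genuine gap exactly where you yourself locate "the crux": in (i)$\Rightarrow$(iii) you \emph{assume} that a semi-potent UUSC ring with zero Jacobson radical is abelian, describe it as "the delicate part," and offer only a gesture toward "square-zero manipulations together with semi-potency" or an unspecified appeal to the literature. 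That is not a proof of the one implication that carries the real content of the theorem; everything downstream of abelian-ness in your argument depends on it, so the cycle does not close.

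The missing ingredient is Levitzki's structure theorem \cite[Theorem 2.1]{10}: if a semi-potent ring $\bar R$ with $J(\bar R)=0$ has a nonzero element $\bar a$ with $\bar a^2=0$, then there is a nonzero idempotent $\bar e$ with $\bar e\bar R\bar e\cong {\rm M}_2(S)$ for a nontrivial ring $S$. Corners of UUSC rings are UUSC (Proposition \ref{prop 2.4} of the paper), yet in ${\rm M}_2(S)$ the identity is a sum of two units,
$$\begin{pmatrix}1&0\\0&1\end{pmatrix}=\begin{pmatrix}1&1\\1&0\end{pmatrix}+\begin{pmatrix}0&-1\\-1&1\end{pmatrix},$$
and a ring in which $1=u_1+u_2$ with $u_1,u_2$ units cannot be UUSC, since then $u_1=1+(-u_2)=0+u_1$ gives two strongly clean decompositions of the unit $u_1$. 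This contradiction shows $\bar R$ is reduced, hence abelian, after which your corner-ring argument (and the rest of your cycle) goes through. You should either reproduce this step or find a genuine substitute for it; as it stands, "ruling out a non-central idempotent" is asserted, not established.
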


\begin{proof}
(i) $\Rightarrow$ (ii): Since $R$ is semi-potent, $\bar{R} = \frac{R}{J(R)}$ is semi-potent. We now show that $\bar{R}$ is a reduced ring. Assume $a^2 = 0$ for some $ 0 \not = a \in \bar{R}$. Then, by \cite[Theorem 2.1]{10}, there exists $0 \not = e^2 = e\in \bar{R}$ such that $e \bar{R} e \cong {\rm M}_2 (S)$ for a non-trivial ring $S$. As $\bar{R}$ is UUSC, so $e \bar{R} e$ is UUSC by Proposition \ref{prop 2.4}. It is, however, {\it not} 2-good, so $1$ is not 2-good in $e \bar{R} e$ whence $1$ is {\it not} 2-good in ${\rm M}_2 (S)$ too, and this is a contradiction, because in any $2 \times 2$ matrix ring it is always true that

$$
\begin{pmatrix}
1  & 0\\
0 & 1
\end{pmatrix}=\begin{pmatrix}
1 & 1\\
1 & 0
\end{pmatrix}+
\begin{pmatrix}
0 & -1\\
-1 & 1
\end{pmatrix}.
$$

\noindent Hence, $\bar{R}$ is reduced and thus abelian. We next show that $\bar{R}$ is a Boolean ring. Assume on the contrary that $a^2 \not = a$ for some $a \in \bar{R}$. As $\bar{R}$ is semi-potent with $J(\bar{R}) = \{0\}$, $(a - a^2)\bar{R}$ contains a non-zero idempotent, say $e$. Write $e(a - a^2)=b$ with $b \in \bar{R}$. Then, it must be that $$e = e(a - a^2)b = ea\cdot e(1 - a)b = e(1 - a)\cdot eab$$. As $e \bar{R} e$ is reduced, both $ea$ and $e(1-a)$ are units of $e \bar{R} e$. Since $ea + e(1 - a) = e$, it manifestly follows from Proposition \ref{prop2.1} that $e \bar{R} \bar{e}$ is {\it not} UUSC. This, however, contradicts Proposition \ref{prop 2.4}. Hence, $\bar{R}$ is Boolean, as wanted.

(ii) $\Rightarrow$ (iii) $\Rightarrow$ (iv) $\Rightarrow$ (v) $\Rightarrow$ (vi): These implications follow immediately from \cite[Theorem 3.1]{1}.

(vi) $\Rightarrow$ (i): Let $\bar{u} = \bar{e} + \bar{v}$, where $\bar{e} \in {\rm Id}(\bar{R})$ and $\bar{u}, \bar{v} \in U (\bar{R})$ with $\bar{e} \bar{v} = \bar{v} \bar{e}$. As units lift modulo $J(R)$, we can assume $u, v \in U (R)$. By (vi), $u = f + j$ and $v = g + j^\prime$, where $f, g \in {\rm Id}(R)$ and $j, j^\prime \in J(R)$. Then, $f = u -j \in U (R)$, so $f = 1$. Similarly, $g = 1$. Therefore, $\bar{1} = \bar{e} + \bar{1}$, so that $\bar{e} = \bar{0}$ and hence $\bar{R}$ is UUSC, as desired.
\end{proof}

A ring $R$ is said to be left quasi-duo (resp., right quasi-duo) if every maximal left ideal (resp., maximal right ideal) of $R$ is an ideal.

\medskip

We now extract a series of assertions as follows.

\begin{corollary}
Every potent UUSC ring is left and right quasi-duo.
\end{corollary}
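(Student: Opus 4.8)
The plan is to feed the hypotheses into Theorem~\ref{theo3.1}: once we know that $R/J(R)$ is UUSC, that theorem (applicable since a potent ring is \emph{a fortiori} semi-potent) tells us $R/J(R)$ is Boolean, hence commutative, and commutativity of the radical quotient is exactly what forces the quasi-duo property on $R$. So there are really two tasks: (1) show $R/J(R)$ is UUSC, and (2) turn commutativity of $R/J(R)$ into ``every maximal one-sided ideal of $R$ is two-sided''. Task (1) is the only non-formal point, because $R$ is \emph{not} assumed abelian, so the Lemma of Section~2 does not apply directly; potency is what will rescue the argument.

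\medskip
\noindent\textbf{Step 1 (the heart of the matter): $R/J(R)$ is UUSC.} Write $\bar R = R/J(R)$ and take a strongly clean decomposition $\bar u = \bar e + \bar v$ in $\bar R$, with $\bar e \in {\rm Id}(\bar R)$, $\bar u, \bar v \in U(\bar R)$ and $\bar e\bar v = \bar v\bar e$; the aim is $\bar e = \bar 0$. Using that $R$ is potent, I would lift $\bar e$ to an idempotent $e \in R$, and lift $\bar u, \bar v$ to units $u, v \in U(R)$ (units always lift modulo $J(R)$). The relation $\bar e\bar v = \bar v\bar e$ says precisely that the idempotents $e$ and $f := vev^{-1}$ of $R$ are congruent modulo $J(R)$, so the classical conjugating unit $g := fe + (1-f)(1-e)$ lies in $1 + J(R) \subseteq U(R)$ and satisfies $geg^{-1} = f = vev^{-1}$. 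Consequently $w := g^{-1}v$ is a unit of $R$ with $\bar w = \bar v$ and $ew = we$. Now $e + w$ maps onto $\bar e + \bar v = \bar u \in U(\bar R)$, hence $e + w \in U(R)$; and $e + w = e + w = 0 + (e+w)$ displays two strongly clean decompositions of this unit of $R$. Since $R$ is UUSC, $e = 0$, whence $\bar e = \bar 0$, and so $\bar R$ is UUSC.

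\medskip
\noindent\textbf{Step 2 (finishing).} Since $R$ is potent, Theorem~\ref{theo3.1} applies to $R$, and from Step~1 we get that $\bar R = R/J(R)$ is Boolean, in particular commutative. Let $M$ be any maximal left ideal of $R$. Then $J(R) \subseteq M$ (as $J(R)$ is the intersection of all maximal left ideals), so $M/J(R)$ is a maximal left ideal of the commutative ring $\bar R$, hence a two-sided ideal of $\bar R$, and therefore $M$ is a two-sided ideal of $R$. Thus $R$ is left quasi-duo; the identical argument with right ideals (again $J(R)$ sits inside every maximal right ideal, and $\bar R$ is commutative) shows $R$ is right quasi-duo.

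\medskip
\noindent The step I expect to be the real obstacle is Step~1, the transfer of the UUSC property to $R/J(R)$. A naive lift of the decomposition $\bar u = \bar e + \bar v$ fails, because a lifted unit need not commute with the lifted idempotent; potency is what permits lifting the idempotent in the first place, and the conjugation move (replacing $v$ by $g^{-1}v$) is exactly what restores the commutation so that the UUSC hypothesis on $R$ can be invoked. One could alternatively try to prove $R/J(R)$ reduced directly, imitating the proof of Theorem~\ref{theo3.1}, but that detour again passes through ``$R/J(R)$ is UUSC''.
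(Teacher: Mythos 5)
Your proof is correct, and its overall route coincides with the paper's: both arguments funnel the hypotheses into Theorem~\ref{theo3.1} to conclude that $\bar R=R/J(R)$ is Boolean, and then deduce the quasi-duo property from that. The genuine difference is your Step~1. The paper simply asserts that potency plus UUSC lets one "consult" Theorem~\ref{theo3.1}, but that theorem needs the hypothesis that $\bar R$ (not $R$) is UUSC, and the only passage result available in Section~2 (the Lemma on factoring by an ideal $I\subseteq J(R)$) proves "$R$ UUSC $\Rightarrow$ $R/I$ UUSC" only for \emph{abelian} $R$. Your conjugation argument --- lifting $\bar e$ to $e\in{\rm Id}(R)$ by potency, forming $f=vev^{-1}$, noting $f\equiv e \pmod{J(R)}$, and using the unit $g=fe+(1-f)(1-e)\in 1+J(R)$ with $geg^{-1}=f$ to replace $v$ by $w=g^{-1}v$ commuting with $e$ --- supplies exactly the missing step in full generality, so your write-up is actually more complete than the paper's at this point. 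The concluding step also differs cosmetically: the paper shows $R/M\cong\mathbb{Z}_2$, so $R=M\cup(1+M)$, and checks $xr\in M$ by hand, whereas you invoke the general fact that maximal one-sided ideals contain $J(R)$ and that preimages of (automatically two-sided) ideals of the commutative ring $\bar R$ are two-sided; both are equally valid.
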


\begin{proof}
Let $M$ be a maximal left ideal in a ring $R$. Since $R$ is potent and UUSC, a consultation with Theorem \ref{theo3.1} assures that the quotient-ring $\bar{R} = \frac{R}{J(R)}$ is Boolean, so we have $\frac{\bar{R}}{\bar{M}} \cong \mathbb{Z}_2$. But then $$\frac{R}{M} \cong \frac{\bar{R}}{\bar{M}} \cong \mathbb{Z}_2$$ has two elements, so that $\frac{R}{M} = \{M, 1 + M \}$, and thus we conclude $R = M \cup (1 + M)$.

Now, let $x \in M$ and $r \in R$; we must show that $xr \in M$. This is, however, clear if $r \in M$; for otherwise $r = 1 + y, y \in M$. Then, $xr = x + xy \in M$, as required.
\end{proof}
	
\begin{corollary}\label{cor3.2}
Let $R$ be a regular ring. Then, $R$ is UUSC if, and only if, $R$ is Boolean.
\end{corollary}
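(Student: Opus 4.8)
The plan is to obtain both implications from results already established, chiefly Proposition~\ref{prop2.1} and Theorem~\ref{theo3.1}, so that very little genuinely new work is needed.

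For the easy direction, suppose $R$ is Boolean. Then $R$ is commutative, hence abelian, and $U(R)=\{1\}$ with $1+1=0$; consequently $(U(R)+U(R))\cap{\rm Id}(R)=\{0\}$. By the equivalence (i)~$\Leftrightarrow$~(ii) of Proposition~\ref{prop2.1}, $R$ is UUSC. I prefer routing through Proposition~\ref{prop2.1} here rather than directly verifying the uniquely-strongly-clean condition, since the unit-sum criterion makes the argument a single line.

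For the converse, suppose $R$ is regular and UUSC. The key observation is that $R$ satisfies the hypotheses of Theorem~\ref{theo3.1}: indeed, $J(R)=0$ for every regular ring, and $R$ is semi-potent because any nonzero one-sided ideal $I$ contains an element $a\neq 0$, and the regularity relation $a=axa$ produces a nonzero idempotent ($xa$ if $I$ is a left ideal, $ax$ if $I$ is a right ideal) lying inside $I$. Since $J(R)=0$, we have $R/J(R)=R$, which is UUSC by assumption, so the implication (i)~$\Rightarrow$~(ii) of Theorem~\ref{theo3.1} yields that $R=R/J(R)$ is Boolean.

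I do not expect a genuine obstacle: once one notices that a regular ring is automatically semi-potent with vanishing Jacobson radical, the corollary is an immediate specialization of Theorem~\ref{theo3.1}, and the reverse implication is handled by the unit-sum criterion. The only steps meriting an explicit line are the verification that regular rings are semi-potent and the computation $(U(R)+U(R))\cap{\rm Id}(R)=\{0\}$ in the Boolean case.
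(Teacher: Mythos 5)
Your proof is correct and follows essentially the same route as the paper, whose entire argument is the observation that a regular ring has $J(R)=\{0\}$ and is semi-potent, so that Theorem~\ref{theo3.1} (i)~$\Leftrightarrow$~(ii) applies directly with $\bar{R}=R$. The only cosmetic difference is that you handle the Boolean~$\Rightarrow$~UUSC direction via the unit-sum criterion of Proposition~\ref{prop2.1} rather than the (ii)~$\Rightarrow$~(i) chain of Theorem~\ref{theo3.1}, and you spell out the (standard) verification that regular rings are semi-potent, which the paper takes as known.
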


\begin{proof}
We know that if $R$ is regular, then $J(R)=\{0\}$ and $R$ is semi-potent, as required.
\end{proof}

\begin{proposition}\label{prop3.3}
A ring $R$ is USC if, and only if, $R$ is simultaneously clean and CUSC.
\end{proposition}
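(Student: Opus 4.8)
The plan is to establish both implications directly from the definitions; the statement is really a matter of correctly matching up the universally quantified classes of elements involved in ``USC'', ``CUSC'', and ``clean'', so no structural machinery is needed.

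First I would prove the easy direction: if $R$ is USC, then $R$ is clean and CUSC. By definition every $a\in R$ is uniquely strongly clean, so in particular there exists $e\in{\rm Id}(R)$ with $a-e\in U(R)$; writing $u=a-e$ gives a clean decomposition $a=e+u$, whence $a$ is clean. Since $a$ was arbitrary, $R$ is clean. For the CUSC property, note that the clean elements of $R$ form a subset of $R$ and every element of $R$ is uniquely strongly clean by hypothesis; \emph{a fortiori} every clean element of $R$ is uniquely strongly clean, so $R$ is CUSC.

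Next I would prove the converse: if $R$ is clean and CUSC, then $R$ is USC. Fix $a\in R$. Cleanness of $R$ gives that $a$ is a clean element; the CUSC hypothesis then applies to $a$ and yields that $a$ is uniquely strongly clean. As $a$ was arbitrary, every element of $R$ is uniquely strongly clean, that is, $R$ is USC.

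There is no genuine obstacle here: the only things one must observe are the trivial implication ``strongly clean $\Rightarrow$ clean'' (used in the forward direction to pass from a strong clean witness to a clean decomposition) and the set-theoretic inclusion of the clean elements inside all elements. Accordingly I expect the argument to be a short formal one rather than a computation, requiring none of the later tools (semi-potency, quasi-duo conditions, matrix or trivial-extension lemmas) developed in the paper.
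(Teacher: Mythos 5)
Your argument is correct and is exactly the unwinding of definitions that the paper has in mind when it dismisses the proof as ``self-evident'': the forward direction uses that a strongly clean witness is in particular a clean decomposition, and the converse composes cleanness with the CUSC hypothesis. Nothing further is needed.
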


\begin{proof}
It is self-evident.
\end{proof}

\begin{theorem}\label{theo3.9}
Let $R$ be a ring which is simultaneously CUSC (resp., UUSC) and semi-potent. Then, $2 \in J(R)$.
\end{theorem}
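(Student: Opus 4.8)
The plan is to reduce the CUSC case to the UUSC case and then, assuming $2\notin J(R)$, to manufacture a nonzero corner ring of $R$ whose identity is a sum of two units, contradicting uniqueness of strongly clean decompositions. For the reduction I would first note that every unit $u$ of any ring is clean, since $u-0\in U(R)$ with $0\in{\rm Id}(R)$; hence a CUSC ring has all of its units uniquely strongly clean, that is, it is UUSC, and so it suffices to prove the statement in the form: \emph{if $R$ is UUSC and semi-potent, then $2\in J(R)$}.

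Suppose to the contrary that $2\notin J(R)$; in particular $R\neq 0$. Since the two-sided ideal $2R$ contains $2$, it is not contained in $J(R)$, so semi-potency produces a nonzero idempotent $e\in 2R$, say $e=2r$ with $r\in R$. Now pass to the corner ring $S=eRe$, a nonzero ring with identity $1_S=e$, which is again UUSC by Proposition~\ref{prop 2.4}. Put $r'=ere\in S$. As the integer $2$ is central in $R$, one has $2r'=e(2r)e=e\cdot e\cdot e=e=1_S$, and since $er'=r'=r'e$ inside $S$ the element $2e=1_S+1_S$ of $S$ satisfies $(2e)r'=r'(2e)=e$; hence $2e\in U(S)$ and therefore $r'=(2e)^{-1}\in U(S)$ as well. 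Then $1_S=e=2r'=r'+r'$ exhibits the identity of $S$ as a sum of two units of $S$, so $r'=0_S+r'=1_S+(-r')$ gives two strongly clean decompositions of $r'$ in $S$ with the distinct idempotents $0_S\neq 1_S$ (distinct because $S\neq 0$), contradicting that $S$ is UUSC. Hence $2\in J(R)$.

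The argument is essentially routine once the corner ring $S=eRe$ is chosen as the right object; the two points to handle with a little care are that the relevant invertible element of $S$ is $2e=e+e$ (not $2\cdot 1_R$), with inverse $r'=ere$, and that semi-potency is genuinely needed in order to locate the nonzero idempotent \emph{inside} the ideal $2R$ and not merely somewhere in $R$ --- indeed the statement fails for non-semi-potent rings such as $\mathbb{Z}$. I do not anticipate any serious obstacle beyond this bookkeeping and the appeal to Proposition~\ref{prop 2.4} in the form that corner rings of UUSC rings are UUSC.
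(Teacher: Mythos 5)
Your proof is correct, but it takes a genuinely different route from the paper's. The paper stays inside $R$: from $0\neq e=2a\in 2R$ it checks that $ea=ae$, verifies directly that $1-3e\in U(R)$ (via the explicit inverse $1-3ae$), and then exhibits the two strongly clean decompositions $1-2e=0+(1-2e)=e+(1-3e)$ of the unit $1-2e$, forcing $e=0$. You instead pass to the corner ring $S=eRe$, observe that $2\cdot 1_S=2e$ is invertible there with inverse $r'=ere$, so that $1_S=r'+r'$ is $2$-good in $S$, and then contradict the UUSC property of $S$ via the two decompositions $r'=0+r'=1_S+(-r')$; this is essentially the content of Theorem~\ref{theo3.10}(i)--(ii), which the paper only proves \emph{after} this theorem. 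What your approach buys is conceptual clarity --- the contradiction is isolated as ``the identity of a nonzero corner of a UUSC ring cannot be $2$-good'' --- at the cost of invoking Proposition~\ref{prop 2.4} for corner rings (whose proof does correctly handle the case $1_S\neq 1_R$, so this is legitimate). The paper's argument buys self-containedness: no corner rings, no auxiliary proposition, just one explicit unit computation in $R$. Your reduction of the CUSC case to the UUSC case (every unit is clean, so CUSC implies UUSC) matches the remark the paper makes in the proof of Theorem~\ref{theo3.10}(i), and all your verifications ($2r'=e$, $er'=r'e=r'$, invertibility of $2e$ and $r'$ in $S$, distinctness of $0_S$ and $1_S$) check out.
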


\begin{proof}
Let us assume in a way of contradiction that $2 \notin J(R)$. Since $R$ is semi-potent, there exists $0 \neq e^2=e \in 2 R$. Write $e=2a$ with $a \in R$. Therefore, $ea=ae$ and so $(1-3e)(1-3ae)=1$. Similarly, $(1-3ae)(1-3e)=1$ and thus $1-3e\in U(R)$. We, however, know that $1-2e\in U(R)$, whence
$$1-2 e=0+(1-2 e)=e+(1-3 e).$$ We, therefore, have two strongly clean decompositions for the element $1-2e$ while $R$ is CUSC (resp., UUSC). Hence, $e=0$, which is the desired contradiction.
\end{proof}

The following statement is pivotal.

\begin{theorem}\label{theo3.10}
Let $R$ be a CUSC (resp., UUSC) and let $\bar{R}=\dfrac{R}{J(R)}$. Then, the following hold:

\begin{enumerate}
\item
$1$ is not 2-good in $R$.
\item
For any $0 \neq e^2=e \in R$ and any $u_1, u_2 \in U(e R e)$, the inequality $u_1+u_2 \neq e$ is true.
\item
For any $n>1$, there does not exist $0 \neq e^2=e \in R$ such that $eRe \cong {\rm M}_n(S)$ for some ring $S$.
\item
$\bar{1}$ is not 2-good in $\bar{R}$.
\item
If $R$ is potent, then for any $\bar{0} \neq \bar{e}^2=\bar{e} \in \bar{R}$ and any $\bar{u}_1, \bar{u}_2 \in U(\bar{e} \bar{R} \bar{e})$, the inequality $\bar{u}_1+\bar{u}_2 \neq \bar{e}$ is true.
\item
If $R$ is potent, then, for any $n>1$, there does not exist $\bar{0} \neq \bar{e}^2=\bar{e} \in \bar{R}$ such that $\bar{e} \bar{R} \bar{e} \cong {\rm M}_n(S)$ for some ring $S$.
\end{enumerate}
\end{theorem}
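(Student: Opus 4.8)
The plan is to prove parts (i) and (ii) directly from the definition of a CUSC (resp.\ UUSC) ring and then to deduce (iii)--(vi) from them by routine lifting arguments together with one elementary fact about matrix rings; recall that in a CUSC ring every clean element, and in a UUSC ring every unit, has a unique strongly clean decomposition. For (i), if $1=u_1+u_2$ with $u_1,u_2\in U(R)$, then the unit $u_1$ (which is in particular a clean element) admits the two strongly clean decompositions $u_1=0+u_1$ and $u_1=1+(-u_2)$, the required commutativities being automatic; since $u_1$ is clean (resp.\ a unit) and $R$ is CUSC (resp.\ UUSC), uniqueness forces $0=1$, which is impossible in a nonzero ring.

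For (ii), fix $0\neq e^2=e\in R$ and $u_1,u_2\in U(eRe)$ and assume $u_1+u_2=e$. The key gadget is $c:=(1-e)+u_1\in R$: a routine block computation (using $u_1=u_1e$ and that the inverse of $u_1$ in $eRe$ lies in $eRe$) shows $c\in U(R)$, so $c$ is clean. Now $c=0+c$ is one strongly clean decomposition of $c$, and since $u_1-e=-u_2$ we have $c-e=(1-e)+(-u_2)$, which is again a unit of $R$ (same computation, with the unit $-u_2$ of $eRe$) and satisfies $e(c-e)=-u_2=(c-e)e$; hence $c=e+(c-e)$ is a second strongly clean decomposition of $c$, and uniqueness gives $e=0$, a contradiction.

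Parts (iii)--(vi) then follow formally. For (iv), units lift along $R\to\bar R$, so writing $\bar 1=\bar u_1+\bar u_2$ with $\bar u_i\in U(\bar R)$ and choosing unit preimages $u_i\in U(R)$ gives $u_1+u_2=1+j$ with $j\in J(R)$, whence $(1+j)^{-1}u_1+(1+j)^{-1}u_2=1$, contradicting (i). For (iii), if $eRe\cong {\rm M}_n(S)$ with $n>1$ and $e\neq 0$, then $S\neq 0$ and the identity of ${\rm M}_n(S)$ is 2-good: for instance the companion matrix $A$ of $x^n-x+1$ has $\det A=\pm 1$ and $\det(I_n-A)=1$, so $A$ and $I_n-A$ lie in $U({\rm M}_n(S))$ with $A+(I_n-A)=I_n$ (for $n=2$ one may instead use the matrices in the proof of Theorem~\ref{theo3.1}); hence $e$ is 2-good in $eRe$, contradicting (ii). For (v), since $R$ is potent we may lift $\bar e$ to an idempotent $e\neq 0$ of $R$; as $J(eRe)=eJ(R)e$ we have $\bar e\bar R\bar e\cong eRe/J(eRe)$, so, lifting $\bar u_1,\bar u_2$ to units of $eRe$ with $u_1+u_2=e+j$ for some $j\in J(eRe)$ and multiplying by $(e+j)^{-1}\in U(eRe)$, we exhibit $e$ as a sum of two units of $eRe$, contradicting (ii). Finally (vi) follows from (v) exactly as (iii) follows from (ii).

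The only step beyond bookkeeping is the matrix fact used in (iii) and (vi), namely that $I_n$ is a sum of two units of ${\rm M}_n(S)$ for every $n\ge 2$ and every nonzero ring $S$; the companion-matrix construction above settles it uniformly, and alternatively one may reduce to $n=2$ by passing to the corner $f\,{\rm M}_n(S)\,f\cong {\rm M}_2(S)$ with $f=E_{11}+E_{22}$, which corresponds to a nonzero corner $e'Re'$ of $R$, and applying (ii) there. Apart from this, the argument rests only on the standard facts that units lift modulo $J(R)$, that idempotents lift modulo $J(R)$ in a potent ring, and that $J(eRe)=eJ(R)e$ for an idempotent $e$.
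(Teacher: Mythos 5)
Your proof is correct and follows essentially the same route as the paper's: parts (i) and (ii) come from exhibiting two strongly clean decompositions of a unit and invoking uniqueness, and (iii)--(vi) are deduced by the corner-ring and lifting arguments the paper also uses. The only deviations are cosmetic: in (ii) you inline the proof of the subring-closure result (Proposition~\ref{prop 2.4}) by working with the explicit unit $(1-e)+u_1$ of $R$ instead of applying (i) inside $eRe$, and in (iii) you produce a uniform companion-matrix witness that $I_n$ is $2$-good (valid over any ring since the entries are central integers of determinant $\pm1$) rather than reducing to the $n=2$ corner as the paper does.
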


\begin{proof}
\begin{enumerate}
\item
Write $1 = u_1 + u_2$, where $u_1, u_2 \in U(R)$. Then, $u_1 = 1 + (-u_2) = 0 + u_1$. As $R$ is UUSC, we get $1 = 0$ that is an absurd. For the CUSC case the arguments are analogous, because every unit is obviously a clean element.
\item
We know that $eRe$ is a subring of $R$ and $1_{eRe}=e$. Therefore, the result follows from (i).
\item
Since $n>1$, it is well known that ${\rm M}_n(S)$ contains a corner ring isomorphic to the $2 \times 2$ matrix ring. So, $eRe$ contains a corner ring isomorphic to the $2 \times 2$ matrix ring. Thus, without loss of generality, we can assume that $n=2$. Furthermore, in ${\rm M}_2(S)$, we have
$$\begin{pmatrix}
1 & 0 \\ 0 & 1\end{pmatrix}=\begin{pmatrix}
1 & 1 \\ 1 & 0\end{pmatrix}+\begin{pmatrix}
0 & -1 \\ -1 & 1\end{pmatrix}.$$ That is why, in the corner ring $eRe$, we have $u_1+u_2=e$. This is, however, against (ii), as wanted.
\item
And because units lift modulo $J(R)$, point (iv) follows immediately from (i).
\item
Given $\bar{e}, \bar{u}_1, \bar{u}_2$ as in (v), we can assume $e^2=e$ because idempotents lift modulo $J(R)$. Then, one has that $\bar{e} \bar{R} \bar{e} \cong \dfrac{e R e}{J(e R e)}$ exploiting \cite[Theorem 21.10]{6} since $eRe$ is CUSC (resp., UUSC) using Proposition \ref{prop 2.4}.
\item
Follows from (v).
\end{enumerate}
\end{proof}

We now arrive at our chief necessary and sufficient condition that is the transversal between USC and CUSC rings.

\begin{theorem}\label{theo3.4}
A ring $R$ is USC if, and only if, $R$ is CUSC and potent.
\end{theorem}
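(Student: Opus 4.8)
The plan is to prove the two directions separately, with the forward direction being essentially a packaging of earlier results and the reverse direction carrying the real content.

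For the forward implication, suppose $R$ is USC. Then $R$ is clean, and by Proposition~\ref{prop3.3} (or Proposition~\ref{prop 2.4} together with the obvious fact that USC $\Rightarrow$ CUSC) the ring $R$ is CUSC. To see $R$ is potent, recall that USC rings are clean, hence semi-potent, and that idempotents lift modulo $J(R)$ in any clean ring; alternatively one cites the structure theory of USC rings from \cite{2}. Either way $R$ is potent, and this direction is short.

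For the reverse implication, assume $R$ is CUSC and potent. By Proposition~\ref{prop3.3} it suffices to show $R$ is clean, since a clean CUSC ring is USC. Here is where I would use the machinery of Theorem~\ref{theo3.1} and Theorem~\ref{theo3.10}. Because $R$ is CUSC it is in particular UUSC (or one argues directly), and $R$ is semi-potent; Theorem~\ref{theo3.1} then forces $\bar R = R/J(R)$ to be Boolean. A Boolean ring is clean, and since $R$ is potent, idempotents lift modulo $J(R) \subseteq J(R)$; a standard fact (lifting idempotents modulo the Jacobson radical, as in \cite[Theorem~21.10]{6} or \cite[Theorem~3.1]{1}) then upgrades cleanness of $\bar R$ to cleanness of $R$. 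Once $R$ is clean and CUSC, Proposition~\ref{prop3.3} gives that $R$ is USC.

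The step I expect to be the main obstacle is making the passage $\bar R$ Boolean $\Rightarrow$ $R$ clean fully rigorous: one must check that every element $a \in R$ admits an idempotent $e$ with $a - e \in U(R)$, using that $\bar a - \bar e$ is a unit in the Boolean ring $\bar R$ for a lift $\bar e$ of the unique idempotent associated to $\bar a$, and that such a unit lifts back to a unit of $R$ because $J(R)$ is the radical. The abelian/reduced structure of $\bar R$ (from Theorem~\ref{theo3.10}(iii), ruling out matrix corners) ensures idempotents behave well, and potency supplies the lift; assembling these pieces is the crux, but each ingredient is already available in the excerpt.
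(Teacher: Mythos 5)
Your overall skeleton (reduce everything to showing that a potent CUSC ring is clean, then invoke Proposition~\ref{prop3.3}) matches the paper's endgame, and your forward direction is fine. The genuine gap is the step where you invoke Theorem~\ref{theo3.1} to conclude that $\bar R = R/J(R)$ is Boolean. Condition (i) of Theorem~\ref{theo3.1} is that the \emph{quotient} $R/J(R)$ is UUSC, not that $R$ is UUSC, and none of the other five equivalent conditions there is implied by ``$R$ is CUSC'' in any obvious way. So your parenthetical ``$R$ is in particular UUSC'' does not feed into Theorem~\ref{theo3.1}: to pass from $R$ UUSC to $R/J(R)$ UUSC you would have to lift a strongly clean decomposition $\bar u = \bar e + \bar v$ with $\bar e\bar v = \bar v\bar e$ back to $R$, and while potency lets you lift $e$ to an idempotent and $u,v$ to units, the commutation $ev=ve$ is only guaranteed modulo $J(R)$. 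This is precisely why the paper's own lemma on ideals $I\subseteq J(R)$ requires $R$ to be \emph{abelian} for the direction ``$R$ UUSC $\Rightarrow$ $R/I$ UUSC'', and abelian-ness of $R$ (or of $\bar R$) is not yet available at this point of your argument.

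The paper fills this hole differently: it first proves $\bar R$ is \emph{reduced} (a nonzero square-zero element of the semi-potent ring $\bar R$ would produce a corner $\bar e\bar R\bar e\cong {\rm M}_2(S)$ by \cite[Lemma 14]{2}, contradicting Theorem~\ref{theo3.10}(vi)), hence abelian; it then proves uniqueness of strongly clean decompositions in $\bar R$ directly, observing that the difference $\bar g=\bar e-\bar f$ of the idempotents in two decompositions of the same element is a central idempotent (using $2\in J(R)$ from Theorem~\ref{theo3.9}) which is a sum of two units of $\bar g\bar R\bar g$, forbidden by Theorem~\ref{theo3.10}(v) unless $\bar g=\bar 0$; only then does it conclude $\bar R$ is clean and lift cleanness to $R$ via \cite[Proposition 6]{5}. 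Your route can be repaired along the same lines: instead of citing Theorem~\ref{theo3.1}, re-run its proof of (i)~$\Rightarrow$~(ii) with the corner conditions of Theorem~\ref{theo3.10}(v),(vi) --- which are stated for potent CUSC rings and are exactly what that proof consumes --- in place of the hypothesis ``$\bar R$ is UUSC''. As written, however, the key deduction ``$\bar R$ is Boolean'' is not justified.
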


\begin{proof}
Suppose that $R$ is CUSC and potent. So, $R$ is semi-potent, and thus $\bar{R}=\dfrac{R}{J(R)}$ is semi-potent. We show that $\bar{R}$ is reduced. To that goal, assume that $\bar{t}^2=\overline{0}$, where $\overline{0} \neq \bar{t} \in \bar{R}$. Then, utilizing \cite[Lemma 14]{2}, there exists $\overline{0} \neq \bar{e}^{2}=\bar{e} \in \bar{R} $ such that $\bar{e} \bar{R} \bar{e} \cong {\rm M}_2(S)$ for some ring $S$. However, this contradicts Theorem \ref{theo3.10}. Hence, $\bar{R}$ is reduced, so it has to be abelian.

Now, we prove that $\bar{R}$ is USC. To that aim, suppose $$\bar{a}=\bar{e} + \bar{u}=\bar{f}+\bar{v}$$ are two strongly clean decompositions for $\bar{a}$ in $\bar{R}$. Consequently, $\bar{g}:=\bar{e}-\bar{f}$ is a central idempotent of $\bar{R}$, because $\bar{R}$ is abelian and because $\overline{2}=\overline{0}$ taking into account Theorem \ref{theo3.9}. Thus, $\bar{g}=\bar{v}+(-\bar{u})$, so that $\bar{g}=\bar{g} \bar{v} \bar{g}+\bar{g}(-\bar{u}) \bar{g}$, where both $\bar{g} \bar{v} \bar{g}$ and $\bar{g}(-\bar{u}) \bar{g}$ are units of $\bar{g} \bar{R} \bar{g}$. Bearing in mind Theorem \ref{theo3.10}, it must be that $\bar{g}=\overline{0}$, whence $\bar{e}=\bar{f}$. Hence, $\bar{R}$ is USC, as promised, and, moreover, it is also abelian. Therefore, $\bar{R}$ is UC and thus it is clean. As all idempotents lift modulo $J(R)$, we conclude that $R$ is clean referring to \cite[Proposition 6]{5}. Then, $R$ is CUSC and clean, so $R$ is USC by virtue of Proposition \ref{prop3.3}. The other implication is elementary.\\
\end{proof}

As three challenging consequences, we yield:

\begin{corollary}\label{cor3.5}
Let $R$ be a CUSC ring. Then, the following are equivalent:

\begin{enumerate}
\item
$R$ is clean.
\item
$R$ is exchange.
\item
$R$ is potent.
\item
$R$ is USC.
\item
$R$ is strongly clean.
\end{enumerate}
\end{corollary}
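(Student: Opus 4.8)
\textbf{Proof proposal for Corollary~\ref{cor3.5}.}
The plan is to organize the five conditions into a cycle of implications, leaning heavily on Theorem~\ref{theo3.4}, which already identifies USC with the conjunction of CUSC and potent. Since $R$ is assumed CUSC throughout, that theorem collapses the distinction between ``$R$ is potent'' and ``$R$ is USC'' for us, so the real work is to wedge the other three properties (clean, exchange, potent) into the same equivalence class. First I would record the easy direction: (iv) $\Rightarrow$ (v) is immediate since every USC ring is strongly clean by definition, and (v) $\Rightarrow$ (i) is immediate since every strongly clean ring is clean. So it suffices to run (i) $\Rightarrow$ (ii) $\Rightarrow$ (iii) $\Rightarrow$ (iv).

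The implication (i) $\Rightarrow$ (ii) is a classical fact: every clean ring is an exchange ring (Nicholson), so nothing CUSC-specific is needed there. The implication (ii) $\Rightarrow$ (iii) is the crux of the argument and where I expect the only real friction. An exchange ring is always semi-potent (in fact semi-potent is strictly weaker than exchange), so from (ii) we get $R$ semi-potent; to upgrade to potent we must show idempotents lift modulo $J(R)$. For exchange rings this is in fact automatic --- idempotents lift modulo every ideal in an exchange ring, this being one of the standard characterizations --- so (ii) $\Rightarrow$ (iii) will follow by citing that characterization of exchange rings (e.g. from \cite{6} or the Nicholson/Warfield circle of results on exchange rings). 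If one prefers to stay closer to the paper's toolkit, an alternative is: $R$ exchange and CUSC $\Rightarrow$ by Theorem~\ref{theo3.1}-type reasoning $\bar R = R/J(R)$ is Boolean, hence exchange with zero radical, and then lift idempotents along $J(R)$ using the exchange property of $R$ directly.

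Finally (iii) $\Rightarrow$ (iv) is exactly Theorem~\ref{theo3.4}: $R$ is CUSC by hypothesis and potent by (iii), so $R$ is USC. That closes the cycle (i) $\Rightarrow$ (ii) $\Rightarrow$ (iii) $\Rightarrow$ (iv) $\Rightarrow$ (v) $\Rightarrow$ (i), giving the equivalence of all five. The main obstacle, such as it is, is purely bibliographic rather than mathematical: pinning down the precise reference for ``exchange $\Rightarrow$ idempotents lift mod $J(R)$ and $R$ semi-potent,'' since everything else is either a one-line definitional unwinding or a direct appeal to Theorem~\ref{theo3.4}. I would expect the final written proof to be only a few lines long, essentially: ``(i)$\Rightarrow$(ii) since clean rings are exchange; (ii)$\Rightarrow$(iii) since exchange rings are semi-potent and idempotents lift modulo $J(R)$; (iii)$\Rightarrow$(iv) by Theorem~\ref{theo3.4}; (iv)$\Rightarrow$(v)$\Rightarrow$(i) are trivial.''
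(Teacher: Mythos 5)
Your proposal is correct: the cycle (i)$\Rightarrow$(ii)$\Rightarrow$(iii)$\Rightarrow$(iv)$\Rightarrow$(v)$\Rightarrow$(i) works, with the only nontrivial link being the standard facts that clean rings are exchange and that exchange rings are semi-potent with idempotents lifting modulo $J(R)$, after which Theorem~\ref{theo3.4} closes the loop. The paper itself omits the proof as ``straightforward,'' and your argument is exactly the intended one (one could equally use Proposition~\ref{prop3.3} to pass directly between (i) and (iv)), so you have in effect supplied the details the authors left to the reader.
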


\begin{proof}
It is straightforward, so we omit the full details leaving them to the interested reader for an inspection.
\end{proof}

We shall say that a ring $R$ is {\it semi-boolean}, provided the factor-ring $\frac{R}{J(R)}$ is Boolean and idempotents lif modulo $J(R)$ or, equivalently, provided every element of $R$ is a sum of an idempotent and an element from $J(R)$.\\

\begin{corollary}\label{cor3.6}
A ring $R$ is semi-boolean if, and only if, $R$ is potent and $\dfrac{R}{J(R)}$ is UUSC.
\end{corollary}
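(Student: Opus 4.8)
The plan is to read the corollary through Theorem~\ref{theo3.1}: that theorem already equates ``$\dfrac{R}{J(R)}$ is UUSC'' with ``$\dfrac{R}{J(R)}$ is Boolean'' for semi-potent rings, so most of the work is bookkeeping, and the only genuine point to establish is that a semi-boolean ring is semi-potent.

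For the ``if'' implication I would argue directly. Assume $R$ is potent and $\dfrac{R}{J(R)}$ is UUSC. Since $R$ is potent it is in particular semi-potent, so Theorem~\ref{theo3.1} applies and its equivalence (i)$\Leftrightarrow$(ii) upgrades ``$\dfrac{R}{J(R)}$ is UUSC'' to ``$\dfrac{R}{J(R)}$ is Boolean''. Because ``potent'' also includes that idempotents lift modulo $J(R)$, the two defining conditions of a semi-boolean ring hold, and this direction is finished with essentially no computation.

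For the ``only if'' implication, suppose $R$ is semi-boolean; I would use the equivalent description recalled just before the corollary, namely that every element of $R$ is a sum of an idempotent and an element of $J(R)$, and that idempotents lift modulo $J(R)$. Two things must be checked. First, $\dfrac{R}{J(R)}$ is UUSC, which is almost immediate: a Boolean ring has $\bar 1$ as its only unit, and $\bar 1 = \bar e + \bar v$ with $\bar e$ an idempotent and $\bar v$ a unit forces $\bar v = \bar 1$ and hence $\bar e = \bar 0$, so every unit of $\dfrac{R}{J(R)}$ is uniquely strongly clean. Second, $R$ is potent; since idempotent lifting is part of the hypothesis, the real content is to prove that $R$ is semi-potent.

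This last step is the main obstacle, and I would handle it by building a nonzero idempotent inside an arbitrary one-sided ideal. Take a left ideal $L\not\subseteq J(R)$ and pick $a\in L\setminus J(R)$, writing $a=e+j$ with $e^2=e$ and $j\in J(R)$; passing to $\bar R=\dfrac{R}{J(R)}$ gives $\bar a=\bar e\neq\bar 0$, so $e\neq 0$. Put $w:=1+j\in U(R)$; then $ae=(e+j)e=(1+j)e=we$, so $m:=w^{-1}a$ lies in $L$ (as $L$ is a left ideal) and satisfies $me=w^{-1}(ae)=e$. Now $g:=em$ lies in $L$ (again since $L$ is a left ideal) and $g^2=(em)(em)=e(me)m=e^2m=em=g$, so $g$ is an idempotent; and $\bar g=\bar e\,\bar m=\bar e\,\bar a=\bar e\neq\bar 0$, so $g\neq 0$. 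Thus $L$ contains a nonzero idempotent, and since the condition ``every element is an idempotent plus an element of $J(R)$'' is left--right symmetric, the same applies to right ideals; hence $R$ is semi-potent, and therefore potent. (As a cross-check, one could instead note that a semi-boolean ring is clean, via $a=(1-e)+(2e-1+j)$ with $(2e-1)^2=1$ and $j\in J(R)$, and then invoke that clean rings are exchange and exchange rings are semi-potent; but the computation above avoids any appeal to exchange-ring theory.)
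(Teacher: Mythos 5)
Your proof is correct, and the overall skeleton matches the paper's: both directions are ultimately routed through the equivalence (i)$\Leftrightarrow$(ii) of Theorem~\ref{theo3.1}, and the ``if'' direction is essentially identical to the paper's. Where you genuinely diverge is in the ``only if'' direction, specifically in establishing that a semi-boolean ring is potent. The paper disposes of this in one line -- semi-boolean implies clean (via $a=(1-e)+(2e-1+j)$), hence exchange, hence potent -- which is exactly the route you relegate to your parenthetical cross-check. Your main argument instead constructs a nonzero idempotent inside an arbitrary one-sided ideal $L\not\subseteq J(R)$ by hand: writing $a=e+j$, observing $ae=we$ with $w=1+j\in U(R)$, and checking that $g=e\,w^{-1}a\in L$ is a nonzero idempotent. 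This computation is correct (including the left--right symmetry claim) and has the merit of being entirely self-contained, avoiding any appeal to the clean/exchange machinery; the paper's version is shorter but leans on external facts about exchange rings. A second, smaller difference: for ``semi-boolean $\Rightarrow$ $R/J(R)$ is UUSC'' you argue directly that a Boolean ring is UUSC (its only unit is $1$), whereas the paper cites Theorem~\ref{theo3.1} (ii)$\Rightarrow$(i); your direct check is harmless and in fact slightly cleaner, since it does not require first knowing that $R/J(R)$ is semi-potent. Both approaches are valid; yours trades brevity for self-containment.
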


\begin{proof}
Supposing that $R$ is semi-boolean, we get $R$ is clean, and thus it is potent and hence semi-potent. Besides, $\dfrac{R}{J(R)}$ is Boolean, so it is UUSC by Theorem \ref{theo3.1}.

Conversely, assuming $R$ is potent and $\dfrac{R}{J(R)}$ is UUSC, we get $R$ is semi-potent. Thus, $\dfrac{R}{J(R)}$ is Boolean by Theorem \ref{theo3.1} and, moreover, idempotents lift modulo $J(R)$, so that $R$ is semi-boolean.
\end{proof}

\begin{corollary}\label{cor3.8}
If $R=ucn_{0}(R)=\{ e+j| e^{2}=e\in {\rm Z}(R),j\in J(R)\}$, then $R$ is USC. The converse is not true.
\end{corollary}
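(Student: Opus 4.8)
The plan is to prove the implication in two stages — first that $R=ucn_0(R)$ is abelian, then that such a ring is USC — and to read off the failure of the converse from the first stage alone. The first stage is where the only real work lies.

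For the abelianness stage I would first record two cheap facts. Since every element of $R$ is a sum of an idempotent and an element of $J(R)$, the ring $R$ is semi-boolean, hence clean (as in the proof of Corollary~\ref{cor3.6}); in particular $\bar R:=R/J(R)$ is Boolean, so $2\in J(R)$, and writing any unit $u$ as $u=e+j$ with $e=e^2\in {\rm Z}(R)$ and $j\in J(R)$ we get $\bar u=\bar e=\bar 1$, so $1-e$ is an idempotent lying in $J(R)$, hence $u=1+j$; thus $U(R)=1+J(R)$. Now let $f^2=f\in R$ and write $f=e+k$ with $e=e^2\in {\rm Z}(R)$ and $k\in J(R)$. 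Expanding $f^2=f$ and using that $e$ is central yields $k^2=k(1-2e)$; since $(1-2e)^2=1$ the element $1-2e$ is a unit, so $k=k^2(1-2e)^{-1}$, i.e.\ $k\bigl(1-k(1-2e)^{-1}\bigr)=0$ with $1-k(1-2e)^{-1}\in U(R)$, forcing $k=0$ and $f=e\in {\rm Z}(R)$. Hence every idempotent of $R$ is central, i.e.\ $R$ is abelian.

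With $R$ abelian I would invoke Proposition~\ref{prop2.1}, which reduces the CUSC property to the identity $(U(R)+U(R))\cap {\rm Id}(R)=\{0\}$: if $g=u_1+u_2$ with $u_1,u_2\in U(R)=1+J(R)$ and $g^2=g$, then $g\in 2+J(R)=J(R)$, so $g$ is an idempotent in $J(R)$ and $g=0$. Thus $R$ is CUSC, and since it is also clean it is USC by Proposition~\ref{prop3.3}. For the converse, the abelianness stage shows that $R=ucn_0(R)$ forces every idempotent of $R$ to be central; but there are non-abelian USC rings — e.g.\ ${\rm T}_2(\mathbb{Z}_2)$, which is CUSC by Example~\ref{exam1.4}(iv) and potent (being Artinian), hence USC by Theorem~\ref{theo3.4}, yet has the non-central idempotent $\bigl(\begin{smallmatrix}1&0\\0&0\end{smallmatrix}\bigr)$ — so such a ring cannot equal its own $ucn_0$.

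The main obstacle is the abelianness stage: it is exactly the centrality of idempotents that makes ``$R=ucn_0(R)$'' strictly stronger than ``$R$ is USC'', and establishing it requires the short computation inside $J(R)$ built on the involutory unit $1-2e$. Everything after that is bookkeeping with Propositions~\ref{prop2.1} and \ref{prop3.3} together with the elementary facts $2\in J(R)$ and $U(R)=1+J(R)$.
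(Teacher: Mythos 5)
Your proof is correct, but it takes a genuinely different route from the paper's. The paper disposes of the forward implication in one line by citing \cite[Corollary 3.4]{1}, which asserts that $R=ucn_{0}(R)$ forces $R$ to be uniquely clean, and then notes that UC implies USC; your argument is instead self-contained within the present paper's machinery. You show directly that $R=ucn_{0}(R)$ is semi-boolean with $2\in J(R)$ and $U(R)=1+J(R)$, and---the one step with real content---that every idempotent is central, via the identity $k^{2}=k(1-2e)$, the involutory unit $1-2e$, and the cancellation $k\bigl(1-k(1-2e)^{-1}\bigr)=0$ against a unit; Propositions \ref{prop2.1} and \ref{prop3.3} then finish the forward direction. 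What this buys is independence from the external reference and an explicit explanation of \emph{why} $R=ucn_{0}(R)$ is strictly stronger than USC (forced centrality of idempotents); the cost is length. For the converse both you and the paper use ${\rm T}_2(\mathbb{Z}_2)$: the paper exhibits a concrete element witnessing $R\neq ucn_{0}(R)$, whereas you deduce this wholesale from your abelianness lemma, and your derivation that ${\rm T}_2(\mathbb{Z}_2)$ is USC (CUSC by Example \ref{exam1.4}(iv), potent since Artinian, then Theorem \ref{theo3.4}) is a legitimate internal substitute for the paper's appeal to \cite[Theorem 10]{2}. Both arguments are sound.
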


\begin{proof}
If $R =ucn_{0}(R)$, then $R$ is uniquely clean by \cite[Corollary 3.4]{1}. Thus, $R$ is USC.

Next, put $R= {\rm T}_{2} (\mathbb{Z}_{2})$. So, $R$ is USC, but it is manifestly not UC. Also, $R\neq ucn_{0}(R)$, because
$$\begin{pmatrix}
1 & 1\\
0 & 0
\end{pmatrix}=\begin{pmatrix}
1 & 0\\
0 & 0
\end{pmatrix}+\begin{pmatrix}
0 & 1\\
0 & 0
\end{pmatrix},$$
where $\begin{pmatrix}
1 & 0\\
0 & 0
\end{pmatrix}$ is obviously not a central element.
\end{proof}

We are now prepared to establish the following crucial result, which gives a satisfactory necessary and sufficient condition for the triangular matrix ring to be CUSC, and also expands the corresponding one from \cite{2}.

\begin{theorem}\label{theo3.11}
Let $R$ be a commutative semi-potent ring. Then, the following statements are equivalent:
\begin{enumerate}
\item
$R$ is CUSC.
\item
$R$ is CUC.
\item
${\rm T}_n(R)$ is CUSC for all $n \geq 1$.
\item
${\rm T}_n(R)$ is CUSC for some $n \geq 1$.
\end{enumerate}
\end{theorem}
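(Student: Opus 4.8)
The plan is to prove (i)$\Leftrightarrow$(ii) directly, the chain (iii)$\Rightarrow$(iv)$\Rightarrow$(i) by soft arguments, and then (i)$\Rightarrow$(iii) by induction on $n$ after one preliminary reduction. Since a commutative ring is abelian, (i)$\Leftrightarrow$(ii) is immediate from Proposition~\ref{prop2.1} (in fact CUSC, CUC, UUSC and UUC all coincide for $R$). The implication (iii)$\Rightarrow$(iv) is trivial, and (iv)$\Rightarrow$(i) holds because $R$ is (isomorphic to) a subring of ${\rm T}_n(R)$ — the scalar matrices — so Proposition~\ref{prop 2.4} applies. Everything therefore reduces to proving (i)$\Rightarrow$(iii).

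\emph{Preliminary reduction: $U(R)=1+J(R)$ and $2\in J(R)$.} The second is Theorem~\ref{theo3.9}. For the first, assume $u\in U(R)$ with $u-1\notin J(R)$. As $R$ is commutative and semi-potent, the right ideal $(u-1)R$ is not contained in $J(R)$, so it contains a nonzero idempotent $e=(u-1)r$; multiplying by $e$ yields $e=(u-1)re$, whence in the corner ring $eRe$ (with identity $e$) the element $(u-1)e$ is a unit with inverse $re$, and $ue$ is a unit with inverse $u^{-1}e$. Rewriting $ue=e+(u-1)e$ as $e=ue+(1-u)e$ displays $1_{eRe}=e\neq 0$ as a sum of two units of $eRe$. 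But $eRe$ is CUSC by Proposition~\ref{prop 2.4}, while in a CUSC ring $1$ is never $2$-good by Theorem~\ref{theo3.10}(i); this contradiction forces $U(R)=1+J(R)$ (equivalently, by Theorem~\ref{theo3.1}, $R/J(R)$ is Boolean).

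\emph{The induction.} Let $A=(a_{ij})\in{\rm T}_n(R)$ be clean. Each $a_{ii}$ is then clean in $R$, hence uniquely clean (Proposition~\ref{prop2.1}) with a unique idempotent part $f_i\in{\rm Id}(R)$ satisfying $a_{ii}-f_i\in U(R)=1+J(R)$; thus the diagonal of the idempotent in \emph{any} strongly clean decomposition of $A$ is forced to be $(f_1,\ldots,f_n)$. The case $n=1$ is exactly (i). For $n\geq 2$, write $A=\left(\begin{smallmatrix}a & \alpha\\ 0 & A'\end{smallmatrix}\right)$ with $a=a_{11}$ and $A'\in{\rm T}_{n-1}(R)$; then $A'$ is clean, so by the inductive hypothesis $A'=G+(A'-G)$ is its unique strongly clean decomposition, with $G^2=G$, $GA'=A'G$ and $A'-G\in U({\rm T}_{n-1}(R))$. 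For $E=\left(\begin{smallmatrix}f_1 & \beta\\ 0 & G\end{smallmatrix}\right)$ with $\beta$ a row, $A-E$ is automatically a unit (diagonal blocks $a-f_1$ and $A'-G$ are units), while $E^2=E$ and $EA=AE$ amount to $\beta G=(1-f_1)\beta$ and $f_1\alpha+\beta A'=a\beta+\alpha G$. Setting $c:=a-1+f_1=2f_1+(a-f_1-1)\in J(R)$ and using $A'=G+(A'-G)$, these two equations combine to $\beta\big[(A'-G)-cI\big]=\alpha(G-f_1I)$; since $A'-G$ is a unit and $c\in J(R)$, the factor $(A'-G)-cI$ is invertible, so I would \emph{define} $\beta:=\alpha(G-f_1I)\big[(A'-G)-cI\big]^{-1}$ and then verify that $E^2=E$ and $EA=AE$ really hold, using that $G$ — commuting with $A'$ — commutes with $(A'-G)-cI$ and its inverse, together with the identities $(G-f_1I)G=(1-f_1)(G-f_1I)=(1-f_1)G$. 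This gives existence. For uniqueness, two such idempotents $E,\widetilde E$ have equal bottom-right blocks (strongly clean idempotent parts of $A'$, equal by induction); subtracting the two systems for $\delta:=\beta-\widetilde\beta$ gives $\delta G=(1-f_1)\delta$ and $\delta A'=a\delta$, hence $\delta\big[(A'-G)-cI\big]=0$ and so $\delta=0$. Thus every clean element of ${\rm T}_n(R)$ is uniquely strongly clean, i.e. ${\rm T}_n(R)$ is CUSC, and the induction closes.

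\emph{Main obstacle.} The soft implications and the preliminary reduction are routine; the genuine difficulty is the inductive step, in which the diagonal of the idempotent part is pinned down by CUC-ness of $R$ but its strictly upper triangular part must be recovered by solving $E^2=E$ and $EA=AE$ simultaneously. The crucial point is that $c=a-1+f_1$ lands in $J(R)$ — precisely where $U(R)=1+J(R)$ and $2\in J(R)$ are needed — which makes $(A'-G)-cI$ invertible and hence determines $\beta$; the one unavoidable computation is checking that the resulting $\beta$ also satisfies $E^2=E$, not merely the commuting relation used to derive it.
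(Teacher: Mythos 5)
Your proposal is correct and follows essentially the same route as the paper's proof: the same peeling-off of the first row in the induction, the same pinning of the idempotent's diagonal blocks via uniqueness in $R$ and in ${\rm T}_{n-1}(R)$, the same pair of conditions $E^2=E$ and $EA=AE$ collapsing to a single linear equation whose coefficient $(A'-G)-cI$ is invertible because $c=2f_1+(a-f_1-1)\in J(R)$, and the same appeals to Propositions~\ref{prop2.1} and~\ref{prop 2.4} for the soft implications. The only cosmetic difference is that you derive $U(R)=1+J(R)$ directly from semi-potency and Theorem~\ref{theo3.10}(i), where the paper cites this fact from the CUC literature.
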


\begin{proof}
(i) $\Leftrightarrow$ (ii): This implication follows from Proposition \ref{prop2.1}. \\
(iii) $\Rightarrow$ (iv): This implication is obvious.\\
(iv) $\Rightarrow$ (i): Set $e=\mathrm{diag}(1,0, \ldots , 0) \in {\rm T}_n(R)$. Then, one inspects that $R \cong e {\rm T}_n(R) e$. Therefore, $R$ is CUSC by Proposition \ref{prop 2.4}.\\
(i) $\Rightarrow$ (iii): Clearly, the result is true for $n=1$. Assume now that the result holds for $n\geq 2$. Let $A=\begin{pmatrix} a_{11} & x\\
0 & A_{1}
\end{pmatrix}\in {\rm T}_{n+1}(R)$ is a clean element, where $a_{11}\in R$, $x\in M_{1\times n}(R)$ and $A_{1} \in {\rm T}_{n}(R)$. Then, both $A_{1}$ and $a_{11} $ are clean elements. By (i) and by hypothesis, $A_{1}$ and $a_{11}$ have unique strongly clean expressions in ${\rm T}_{n}(R)$ and in $R$, respectively, like this:
$A_{1}=E_{1}+U_{1}$, $a_{11}=e_{11}+u_{11}$.

Note that, if $$A=\begin{pmatrix}
e_{22} & y\\
0 & E_{2}
\end{pmatrix}+\begin{pmatrix}
u_{22} & z\\
0 & U_{2}
\end{pmatrix}$$ is a strongly clean expression in ${\rm T}_{n+1}(R)$, then $A_1=E_2+U_2$ and $a_{11}=e_{22}+u_{22}$ are strongly clean expressions in ${\rm T}_n(R)$ and in $R$, respectively (notice that $A$ is clean, so $A_1$ and $a_{11}$ are both clean). Hence, $E_1=E_2$, $U_1=U_2$, $e_{11}=e_{22}$ and $u_{11}=u_{22}$. Now, it suffices to prove that there exists a unique $x_1 \in {\rm M}_{1\times n}(R)$ such that $$A=\begin{pmatrix}
a_{11} & x \\
0 & A_1
\end{pmatrix}=
\begin{pmatrix}
e_{11} & x_1 \\
0 & E_1
\end{pmatrix}+
\begin{pmatrix}
u_{11} & x-x_1 \\
0 & U_1
\end{pmatrix}$$ is a strongly clean decomposition in ${\rm T}_{n+1}(R)$. Set $E:=\begin{pmatrix}
e_{11} & x_1 \\
0 & E_1
\end{pmatrix}$ and $U:=\begin{pmatrix}
u_{11} & x-x_1 \\
0 & U_1
\end{pmatrix}$. Apparently, $U$ is invertible. It is also easy to see that
\begin{align}
E^2=E & \Leftrightarrow e_{11} x_1+x_1 E_1=x_1 \nonumber\\
& \Leftrightarrow (e_{11} I+E_1) x_1=x_1 \label{eq1sta}\\
E U=U E & \Leftrightarrow e_{11}(x-x_1)+x_1 U_1=u_{11} x_1+(x-x_1) E_1 \nonumber\\
& \Leftrightarrow (U_1-u_{11} I-2 e_{11} I) x_1+(E_1+e_{11} I) x_1 \nonumber\\
& =(E_1-e_{11} I) x \label{eq2star}
\end{align}
Combining \eqref{eq1sta} with \eqref{eq2star}, we deduce
\begin{align*}
& (U_1-u_{11} I-2 e_{11} I) x_1+x_1=(E_1-e_{11} I) x ,\text{so that} \\
& [U_1+(1-2 e_{11}-u_{11}) I] x_1=(E_1-e_{11} I) x .
\end{align*}
Since $R$ is commutative semi-potent and CUSC, one verifies that $R$ is semi-potent CUC and $\dfrac{R}{J(R)}$ is Boolean (see, e.g., \cite[Theorem 3.2]{1}). Thus, $2 \in J(R)$ and $1-u^{\prime} \in J(R)$ for all $u^{\prime} \in U(R)$, because $U(R)=1+J(R)$. In particular, $1-u_{11} \in J(R)$ and so $1-2 e_{11}-u_{11} \in J(R)$. Therefore, $U_1+(1-2 e_{11}-u_{11}) I$ is invertible and hence we can write $$x_1=[U_1+(1-2 e_{11}-u_{11}) I]^{-1}(E_1-e_{11} I) x.$$ Then, $x_1$ has to be unique.

Next, we show that $x_1$ satisfies \eqref{eq1sta} and \eqref{eq2star}. As $E_1 U_1=U_1 E_1$, the elements $E_1+e_{11} I, U_1+(1-2 e_{11}-u_1) I$ and $[U_1+(1-2 e_{11}-u_{11}) I]^{-1}$ all commute. Consequently,
\begin{align*}
& (E_1+e_{11} I) x_1=[U_1+(1-2 e_{11}-u_{11}) I]^{-1}(E_1+e_{11} I) \\
& (E_1-e_{11} I) x=[U_1+(1-2 e_{11}-u_{11}) I]^{-1}(E_1-e_{11} I) x \\
& =x_1 .
\end{align*}
So, $x_1$ satisfies \eqref{eq1sta}. Similarly, $x_1$ satisfies \eqref{eq2star}, as pursued.
\end{proof}

A question which naturally arises is what happens in the case of UUSC rings?

\medskip

We are now concerned with group rings.\\

We denote by $RG$ the group ring of $G$ over $R$. The augmentation mapping $\varepsilon : RG \rightarrow R$ is given by $\varepsilon (\sum a_gg) = \sum a_{g}$ and its kernel, denoted by $\triangle (RG)$, is an ideal generated by $\{1-g| g \in G \}$. A group $G$ is called a {\it $p$-group} if every element of $G$ is power of $p$, where $p$ is a prime. Also, a group $G$ is called {\it locally finite} if every finitely generated subgroup is finite.

\begin{lemma}\label{lem4.1}
Suppose that every idempotent of $RG$ is contained in $R$. Then, $RG$ is CUSC (resp., UUSC) if, and only if, $R$ is CUSC (resp., UUSC).
\end{lemma}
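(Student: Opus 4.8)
The statement is a biconditional whose ``only if'' direction is immediate: $R$ embeds in $RG$ as a subring (via $r\mapsto r\cdot 1$), so if $RG$ is CUSC (resp.\ UUSC), then $R$ inherits that property by Proposition~\ref{prop 2.4}. The whole content is therefore in the ``if'' direction, and by Proposition~\ref{prop2.1} (which collapses CUSC, UUSC, CUC, and ``$(U(R)+U(R))\cap{\rm Id}(R)=\{0\}$'' for abelian rings) together with the observation that the hypothesis ``every idempotent of $RG$ lies in $R$'' forces ${\rm Id}(RG)={\rm Id}(R)$ to be central in $RG$ (so $RG$ is abelian whenever $R$ is — and $R$ must be abelian here since it is UUSC, hence CUC, hence abelian by \cite[Proposition~2.1]{1}), it suffices to verify the UUSC case.

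So the plan is: assume $R$ is UUSC, and take a strongly clean decomposition of a unit in $RG$, say $u = e + v$ with $u,v\in U(RG)$, $e\in{\rm Id}(RG)$, and $ev=ve$; I must show $e=0$. The key step is to push everything down to $R$ via the augmentation map $\varepsilon\colon RG\to R$. By hypothesis $e\in{\rm Id}(RG)$ actually lies in $R$, so $\varepsilon(e)=e$ and $e$ is already an honest idempotent of $R$. Applying the ring homomorphism $\varepsilon$ to $u=e+v$ gives $\varepsilon(u)=e+\varepsilon(v)$ in $R$; since $\varepsilon$ carries units to units (as $\varepsilon$ is a surjective ring map split by $R\hookrightarrow RG$, or directly: if $u$ is invertible in $RG$ then $\varepsilon(u)$ is invertible in $R$), both $\varepsilon(u),\varepsilon(v)\in U(R)$. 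Moreover $e$ is central in $RG$ (being an idempotent of $R$ which, by hypothesis, is the \emph{only} kind of idempotent $RG$ has, forcing ${\rm Id}(RG)$ central — or one simply notes $e\in R$ commutes with $\varepsilon(v)\in R$, which is all we need), so $e\,\varepsilon(v) = \varepsilon(v)\,e$. Thus $\varepsilon(u) = e + \varepsilon(v)$ is a strongly clean decomposition of the unit $\varepsilon(u)$ in $R$, and since $R$ is UUSC we conclude $e=0$. Hence $RG$ is UUSC, and by the reduction above, CUSC as well.

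The only point that needs a little care — and the place I'd expect a referee to look — is the claim that idempotents of $RG$ commute with all of $RG$. If $R$ is UUSC then $R$ is CUC, hence abelian, so ${\rm Id}(R)$ is central in $R$; but an idempotent $e\in R\subseteq RG$ need not a priori commute with elements of $RG\setminus R$. However, for the argument above I do not actually need full centrality of $e$ in $RG$: I only need $e$ to commute with $\varepsilon(v)\in R$, which is automatic since both lie in the abelian ring $R$. So the argument goes through cleanly, and no genuine obstacle arises; the lemma is essentially a bookkeeping exercise built on Proposition~\ref{prop2.1} and the functoriality of $\varepsilon$.
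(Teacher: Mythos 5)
Your core argument is exactly the paper's: since ${\rm Id}(RG)\subseteq R$, push a strongly clean decomposition $u=e+v$ down to $R$ via the augmentation map $\varepsilon$ and invoke the hypothesis on $R$; the necessity direction is the subring observation in both cases. The one genuine flaw is the chain ``$R$ is UUSC, hence CUC, hence abelian'': UUSC does \emph{not} imply CUC. The paper's own example ${\rm T}_2(\mathbb{Z}_2)$ (and ${\rm T}_n(R)$ generally) is CUSC, hence UUSC, yet not abelian and therefore not CUC; Proposition~\ref{prop2.1} identifies these classes only \emph{for abelian rings} and cannot be read backwards to deduce abelianness from UUSC. Fortunately, neither place where you lean on this claim actually needs it. For the commutation $e\,\varepsilon(v)=\varepsilon(v)\,e$ you should not appeal to $R$ being abelian: simply apply $\varepsilon$ to the given relation $ev=ve$ and use $\varepsilon(e)=e$, which is what the paper's proof implicitly does. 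For the reduction of the CUSC case to the UUSC case, instead of arguing via abelianness, run the same augmentation argument directly: if $a=e+u=f+v$ are two strongly clean decompositions of a clean $a\in RG$ with $e,f\in{\rm Id}(RG)={\rm Id}(R)$, then $\varepsilon(a)=e+\varepsilon(u)=f+\varepsilon(v)$ are two strongly clean decompositions of the clean element $\varepsilon(a)$ in $R$, so $e=f$ when $R$ is CUSC. With those two repairs your proof coincides with the paper's. You should also drop the unsupported side remark that the hypothesis forces idempotents of $RG$ to be central in $RG$; as you yourself note, the argument never uses it.
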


\begin{proof}
In view of Proposition \ref{prop2.1}, it suffices to show the UUSC case. The necessity is clear by Proposition \ref{prop 2.4}. For the sufficiency, let $u=e+v$ where $u,v\in U(RG)$ and $e\in Id(RG)=Id(R)$ with $ev=ve$. Then $\varepsilon (u)=\varepsilon (e+v)=\varepsilon (e)+\varepsilon (v)= e+\varepsilon (v)$, where $\varepsilon (u), \varepsilon (v)\in U(R)$ and $e\in Id(R)$ with $e\varepsilon (v)=\varepsilon (v)e$. Since $R$ is UUSC, it follows that $e=0$. Then $RG$ is UUSC.
\end{proof}

As an immediate consequence, we extract:

\begin{corollary}\label{cor4.2}
The integral group ring $\mathbb{Z} G$ of an arbitrary group $G$ is CUSC.
\end{corollary}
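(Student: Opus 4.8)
The plan is to reduce the claim to the base ring $\mathbb{Z}$ by means of Lemma~\ref{lem4.1}. To apply that lemma with $R=\mathbb{Z}$, I first need every idempotent of $\mathbb{Z}G$ to be contained in $\mathbb{Z}\cdot 1_{\mathbb{Z}G}$; since the only idempotents of $\mathbb{Z}$ are $0$ and $1$, this is precisely the classical statement that the integral group ring of an arbitrary group $G$ possesses no idempotents other than $0$ and $1$. Granting this, Lemma~\ref{lem4.1} gives at once that $\mathbb{Z}G$ is CUSC if and only if $\mathbb{Z}$ is CUSC.

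It then remains to verify that $\mathbb{Z}$ is CUSC, and here I would invoke the Example concerning rings $R$ with ${\rm Id}(R)=\{0,1\}$: such a ring is CUSC exactly when $1$ is not $2$-good. For $R=\mathbb{Z}$ we have $U(\mathbb{Z})=\{1,-1\}$, so $U(\mathbb{Z})+U(\mathbb{Z})=\{-2,0,2\}$, which does not contain $1$; hence $1$ is not $2$-good in $\mathbb{Z}$, the ring $\mathbb{Z}$ is CUSC, and therefore so is $\mathbb{Z}G$.

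The main (and only non-formal) point is thus the triviality of the idempotents of $\mathbb{Z}G$. For finite $G$ this admits an elementary proof: embed $\mathbb{Z}G\hookrightarrow\mathbb{C}G\cong\bigoplus_i {\rm M}_{n_i}(\mathbb{C})$, write an idempotent as $e=\sum_{g}a_g g$, and compare the regular character to obtain $|G|\,a_1=\sum_i n_i\,{\rm rank}(e_i)$ with $0\le{\rm rank}(e_i)\le n_i$; hence $0\le a_1\le 1$, and as $a_1\in\mathbb{Z}$ we get $a_1\in\{0,1\}$, which forces $e=0$ or $e=1$. For arbitrary $G$ I would pass to the group von Neumann algebra $L(G)$, whose canonical faithful finite trace $\tau$ satisfies $\tau(\sum a_g g)=a_1$; since an idempotent has the same trace as its range projection, $a_1=\tau(e)\in[0,1]\cap\mathbb{Z}=\{0,1\}$, and faithfulness of $\tau$ on the range projection of $e$ (resp.\ of $1-e$) then yields $e=0$ (resp.\ $e=1$). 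Alternatively this can simply be cited as a well-known theorem; beyond it, the argument is a routine application of Lemma~\ref{lem4.1} together with the above Example.
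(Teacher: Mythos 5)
Your proof is correct and is precisely the argument the paper intends, since the paper states the corollary as an immediate consequence of Lemma~\ref{lem4.1} without further detail: the classical fact that ${\rm Id}(\mathbb{Z}G)=\{0,1\}\subseteq\mathbb{Z}$ reduces the claim to the base ring, and $\mathbb{Z}$ is CUSC because $1\notin U(\mathbb{Z})+U(\mathbb{Z})=\{-2,0,2\}$. Your extra justification of the triviality of idempotents in $\mathbb{Z}G$ (via the trace on $L(G)$) is a welcome elaboration of a step the paper leaves implicit, but it does not change the route.
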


The following two assertions are worthwhile.

\begin{proposition}\label{prop4.4}
Let $G$ be a locally finite $2$-group, and let $R$ be UUSC and semi-potent. Then, $RG$ is UUSC.
\end{proposition}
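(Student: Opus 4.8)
The plan is to reduce the problem to known results about $J(RG)$ for group rings over a locally finite $p$-group, and then apply the machinery already set up for ideals contained in the Jacobson radical. First I would recall that if $G$ is a locally finite $2$-group and $2 \in J(R)$, then the augmentation ideal $\triangle(RG)$ is contained in $J(RG)$; this is a classical fact (see Connell's theorem and its refinements on nilpotence/radical of group algebras of locally finite $p$-groups). The key point is that $RG/\triangle(RG) \cong R$ via the augmentation map $\varepsilon$, so we are in a position to transfer the UUSC property between $RG$ and $R$ through an ideal sitting inside the radical. Note that by Theorem~\ref{theo3.9}, since $R$ is UUSC and semi-potent, we automatically have $2 \in J(R)$, so the hypothesis needed to invoke the group-ring radical fact is for free.

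Next I would verify the two ingredients needed to run the ``ideal contained in $J(R)$'' lemma in the UUSC direction. The easy direction of that lemma says: if $R/I$ is UUSC then $R$ is UUSC, for any ideal $I \subseteq J(R)$ — no abelianness or lifting hypotheses are required there. So once we know $\triangle(RG) \subseteq J(RG)$ and $RG/\triangle(RG) \cong R$ is UUSC, the earlier lemma immediately yields that $RG$ is UUSC. This is the cleanest route and avoids having to analyze units and idempotents of $RG$ directly. Concretely, one writes: let $u = e + v$ with $u,v \in U(RG)$, $e \in \mathrm{Id}(RG)$, $ev = ve$; applying $\varepsilon$ gives $\varepsilon(u) = \varepsilon(e) + \varepsilon(v)$ with $\varepsilon(e)$ idempotent, $\varepsilon(u),\varepsilon(v)$ units of $R$, and the two commuting; since $R$ is UUSC, $\varepsilon(e) = 0$, so $e \in \triangle(RG) \subseteq J(RG)$, and an idempotent in the Jacobson radical is $0$.

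The main obstacle will be pinning down the precise reference (and hypotheses) for the statement $\triangle(RG) \subseteq J(RG)$ when $G$ is a locally finite $2$-group and $2 \in J(R)$. One has to be a little careful: the cleanest classical statements are for $R$ with $p$ nilpotent or for $RG$ with $G$ a finite $p$-group, where $\triangle(RG)$ is actually nilpotent; for the locally finite case one passes to a direct limit over finite subgroups $H \leq G$, using that $\triangle(RH) \subseteq J(RH)$ and that Jacobson radicals behave well under the directed union $RG = \bigcup_H RH$ together with the semi-potent/one-sided ideal characterization of $J$. I would structure the argument so that this reduction to finite subgroups is the single technical lemma, citing Connell-type results (e.g.\ the description of $J(RG)$ for locally finite groups), and then the rest of the proof is the two-line transfer via $\varepsilon$ described above. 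A secondary point to check is that semi-potence of $R$ is genuinely used — it is needed precisely to guarantee $2 \in J(R)$ through Theorem~\ref{theo3.9}, which in turn is what makes $\triangle(RG)$ land inside $J(RG)$; without it the conclusion can fail.
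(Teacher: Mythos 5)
Your proposal is correct and follows essentially the same route as the paper: apply the augmentation map to a strongly clean decomposition of a unit, use that $R$ is UUSC to conclude $\varepsilon(e)=0$, and then use $2\in J(R)$ (from Theorem~\ref{theo3.9}) together with $\triangle(RG)\subseteq J(RG)$ for locally finite $2$-groups to force $e=0$. The paper cites exactly this radical containment from Zhou's \emph{On clean group rings} (Lemma 2 there), so your framing via the quotient lemma versus the paper's direct write-out is only a cosmetic difference.
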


\begin{proof}
Let $u=e+v$ with $ev=ve$, where $u,v \in U(RG)$ and $e \in {\rm Id}(RG)$. Consequently, $$\varepsilon(u)=\varepsilon(e)+\varepsilon(v),
\varepsilon(e) \varepsilon(v)=\varepsilon(e v)=\varepsilon(v e)=\varepsilon(v) \varepsilon(e),$$ where
$\varepsilon(u), \varepsilon(v) \in U(R)$ and $\varepsilon(e) \in {\rm Id}(R)$. So, $\varepsilon(e)=0$, and thus $e \in \operatorname{ker} \varepsilon=\Delta (RG)$. We also know by Theorem \ref{theo3.9} that if $R$ is UUSC and semi-potent, then $2 \in J(R)$. Consequently, using \cite[Lemma 2]{4}, we have $\Delta (RG) \subseteq J(RG)$, so that $e \in J(RG)$ which assures that $e=0$.
\end{proof}

We finish off our work with the following criterion.

\begin{theorem}\label{prop4.3}
Let $R$ be a potent ring and let $G$ be a locally finite group. Then, $RG$ is CUSC (resp., UUSC) if, and only if, $R$ is CUSC (resp., UUSC) and $G$ is a $2$-group.
\end{theorem}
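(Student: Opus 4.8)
The plan is to prove both directions by reducing, via the augmentation map $\varepsilon : RG \to R$, to statements about $R$ and to the known fact that $\triangle(RG) \subseteq J(RG)$ exactly when certain $2$-group/nilpotence conditions hold. I would organize the argument around Theorem~\ref{theo3.1} (which characterizes when $\bar R = R/J(R)$ is Boolean for semi-potent $R$) and Theorem~\ref{theo3.4} (which says USC $\Leftrightarrow$ CUSC + potent), treating the CUSC case first and then noting that for abelian rings the UUSC case follows by Proposition~\ref{prop2.1}; since a potent CUSC ring is USC hence abelian, Proposition~\ref{prop2.1} applies and lets us handle both notions uniformly.

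\emph{Necessity.} Suppose $RG$ is CUSC (resp.\ UUSC). First, $R$ embeds in $RG$ as a subring with $1_R = 1_{RG}$, so $R$ is CUSC (resp.\ UUSC) by Proposition~\ref{prop 2.4}. It remains to show $G$ is a $2$-group. Since $R$ is potent, $R$ is semi-potent, and I would argue that $RG$ inherits enough structure — or more directly, I would pick any $g \in G$ of order $n$ and work inside $R\langle g\rangle \subseteq RG$, which is again CUSC (resp.\ UUSC) by Proposition~\ref{prop 2.4}. If some prime $p \neq 2$ divided $n$, then $R\langle g\rangle$ would contain a subring isomorphic to $R[x]/(x^p - 1)$; using that $R$ is potent one shows $1$ is $2$-good in this subring (for instance, over $\mathbb{Z}_2$ the element $1+x+\dots+x^{p-1}$ and a companion unit witness a bad strongly clean decomposition, or one invokes Theorem~\ref{theo3.10}(i) that $1$ is not $2$-good in a CUSC/UUSC ring to derive a contradiction). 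Alternatively, and more cleanly: by Theorem~\ref{theo3.9}, $2 \in J(R)$, so $R/J(R)$ has characteristic $2$; if $g$ had odd order $> 1$ then $\bar R\langle \bar g\rangle$ would be a nontrivial separable extension containing a nontrivial idempotent beyond $\{0,1\}$ and, being semisimple of characteristic $2$, would have $1$ as a sum of two units, contradicting Theorem~\ref{theo3.10}(i) applied to the CUSC/UUSC ring $\bar R \langle \bar g\rangle$ (which is a corner-type subquotient handled by Proposition~\ref{prop 2.4} and the lifting of idempotents). Hence every element of $G$ has $2$-power order, i.e.\ $G$ is a $2$-group.

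\emph{Sufficiency.} Suppose $R$ is CUSC (resp.\ UUSC) and $G$ is a locally finite $2$-group. Since $R$ is potent, Theorem~\ref{theo3.9} (together with Theorem~\ref{theo3.1}, as $R$ is semi-potent) gives $2 \in J(R)$. Then \cite[Lemma 2]{4} applies and yields $\triangle(RG) \subseteq J(RG)$. Now take a strongly clean decomposition $u = e + v$ in $RG$ with $eg = ge$ (or in the CUSC case, start from a clean element $u = e + v$ and a competing strongly clean decomposition). Applying $\varepsilon$ gives $\varepsilon(u) = \varepsilon(e) + \varepsilon(v)$ with $\varepsilon(e)\varepsilon(v) = \varepsilon(v)\varepsilon(e)$, where $\varepsilon(u), \varepsilon(v) \in U(R)$ and $\varepsilon(e) \in {\rm Id}(R)$. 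Since $R$ is CUSC (resp.\ UUSC), uniqueness in $R$ forces $\varepsilon(e) = \varepsilon(e')$ for the two decompositions, hence $e - e' \in \triangle(RG) \subseteq J(RG)$; as $e - e'$ is then an idempotent-difference lying in $J(RG)$ with $e, e'$ both idempotent and (after reducing modulo $J$) equal, one concludes $e = e'$. For the UUSC version this is immediate ($\varepsilon(e) = 0 \Rightarrow e \in \triangle(RG) \subseteq J(RG) \Rightarrow e = 0$, exactly as in Proposition~\ref{prop4.4}); for the CUSC version one additionally checks that $R/J(R)$ Boolean plus $\triangle(RG) \subseteq J(RG)$ forces $RG/J(RG)$ to have the right idempotent structure so that the two lifts coincide. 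Thus $RG$ is CUSC (resp.\ UUSC).

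\emph{Main obstacle.} The delicate point is the CUSC direction of sufficiency: unlike the UUSC case, knowing $\varepsilon(e)$ is determined does not instantly pin down $e$, since one must rule out two genuinely different idempotents of $RG$ having the same augmentation and both appearing in strongly clean decompositions of the same clean element. I expect to handle this by transferring to $\bar R G / J(\bar R G)$ — using that $G$ is a locally finite $2$-group and $\bar R$ is Boolean (so $2 = 0$), hence $\triangle(\bar R G)$ is nil and $\bar R G$ modulo its Jacobson radical is just $\bar R$ — which collapses the idempotent ambiguity; combined with potency of $R$ (and the fact, via Theorem~\ref{theo3.4}, that a potent CUSC ring is USC and thus abelian, so Proposition~\ref{prop2.1} reduces CUSC to UUSC) this should close the gap. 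The rest is routine bookkeeping with $\varepsilon$ and the subring lemmas already in hand.
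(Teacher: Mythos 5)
The main gap is in the sufficiency direction for the CUSC case. Your augmentation argument is fine for UUSC (it is exactly Proposition \ref{prop4.4}), but for CUSC you reduce two competing strongly clean decompositions $u=e+v=e'+v'$ of a clean element of $RG$ to $\varepsilon(e)=\varepsilon(e')$, hence $e-e'\in\triangle(RG)\subseteq J(RG)$, and then assert $e=e'$. That last step does not follow: two idempotents congruent modulo the Jacobson radical need not coincide (in ${\rm T}_2(\mathbb{Z}_2)$ the idempotents $\left(\begin{smallmatrix}1&0\\0&0\end{smallmatrix}\right)$ and $\left(\begin{smallmatrix}1&1\\0&0\end{smallmatrix}\right)$ differ by an element of $J$), and the standard rescue --- that $(e-e')^3=e-e'$ together with $e-e'\in J(RG)$ forces $e=e'$ --- requires $ee'=e'e$, which you do not have, since $e$ and $e'$ come from different decompositions and $RG$ is not known to be abelian. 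Your proposed repair via Proposition \ref{prop2.1} needs $RG$ itself (not merely $R$) to be abelian, which is precisely what is unavailable at that stage. The paper sidesteps all of this: by Theorem \ref{theo3.4} a potent CUSC ring $R$ is already USC, and then $RG$ is USC for a locally finite $2$-group $G$ by the cited result \cite[Theorem 3.1]{7}; USC trivially implies CUSC. You need either to invoke that group-ring theorem or to reprove it --- the direct augmentation route, as written, does not close.

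For necessity your plan is the right one (pass to $R\langle g\rangle$ for $g$ of odd prime order $p$ and contradict Theorem \ref{theo3.10}(i)), but both justifications you offer are incomplete. The claim that a semisimple ring of characteristic $2$ must have $1$ as a sum of two units is false as a general principle ($\mathbb{Z}_2$ itself is a counterexample), and passing to the quotient $\bar R\langle\bar g\rangle$ presupposes that factor rings of CUSC/UUSC rings are again such, which the paper only proves under extra hypotheses (abelianness and idempotent lifting). The paper's argument instead uses that $\bar R$ is Boolean (Theorem \ref{theo3.1}) to get $p\in U(R)$, splits $R\langle g\rangle\cong R[x]/\langle x-1\rangle\oplus S$ with $S=R[x]/\langle x^{p-1}+\cdots+x+1\rangle$, so that $S$ is UUSC as a direct factor by Proposition \ref{prop 2.5}, and then computes explicitly that $x$ and $x+1$ are units of $S$, whence $x+1=0+(x+1)=1+x$ are two distinct strongly clean decompositions of a unit --- the concrete witness your sketch gestures at but never actually produces. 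So the skeleton of your necessity argument matches the paper's, but the decisive computations are missing, and the CUSC half of sufficiency contains a genuine logical gap.
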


\begin{proof}
Firstly, we prove the CUSC case. To that purpose, assume that $R$ is CUSC and $G$ is a $2$-group. Since $R$ is a potent ring, one observes that $R$ is USC in virtue of Theorem \ref{theo3.4}. Therefore, $RG$ is USC owing to \cite[Theorem 3.1]{7}. Secondly, suppose that $RG$ is CUSC, so $R$ is CUSC by virtue of Proposition \ref{prop2.4}. Assume, in a way of contradiction, that $G$ is not a $2$-group. Then, $G$ contains an element $g$ of prime order $p>2$. As $R\langle g\rangle$ is a subring of $RG$, so $R\langle g\rangle$ is UUSC by Proposition \ref{prop 2.4}. Since $R$ is potent and UUSC, the factor $\dfrac{R}{J(R)}$ is Boolean employing Theorem \ref{theo3.1}, so $p\in U(R)$. And since
$$
x^{p-1}+x^{p-2}+\cdots + x+1=(x-1)[x^{p-2}+2x^{p-3}+\cdots +(p-2)x+(p-1)] +p,
$$ we derive $(x-1)(x^{p-1}+x^{p-2}+\cdots +x+1)=R[x]$. Hence, we obtain
$$R\langle g\rangle \cong \dfrac{R[x]}{\langle x^{p}-1\rangle }\cong \dfrac{R[x]}{\langle x-1\rangle}\oplus \dfrac{R[x]}{\langle x^{p-1}+\cdots +x+1\rangle }.$$ Thus,
$S:= \dfrac{R[x]}{\langle x^{p-1}+\cdots +x+1\rangle}$ is UUSC by Proposition \ref{prop 2.5}. However, in $S$, we have that $$(x^{2}+x)[x^{p-3}+x^{p-5}+\cdots +x^{2}+1]= x^{p-1}+\cdots +x=-1\in U(S),$$ whence $x,x+1\in U(S)$. But, we also have $x+1=0+(x+1)=1+x$ that are two strongly clean decompositions for $1+x\in U(R)$, contradicting our initial assumption. Finally, $G$ is a $2$-group, as asserted.

Next, we show the UUSC case. Let $R$ be UUSC and $G$ a $2$-group. Then, $RG$ is UUSC exploiting Proposition \ref{prop4.4}. Conversely, letting $RG$ be UUSC, then $R$ is UUSC consulting with Proposition \ref{prop 2.4}. Additionally, $G$ has to be a $2$-group using the same arguments as in the proof of the previous case.
\end{proof}

\medskip
\medskip
\medskip
	
\noindent{\bf Funding:} The first-named author, P.V. Danchev, of this research paper was partially supported by the Junta de Andaluc\'ia under Grant FQM 264, and by the BIDEB 2221 of T\"UB\'ITAK.

\vskip3.0pc

\end{document}